\title{The diamond-free process}
\author{Michael E. Picollelli\footnote{
\small Department of Electrical \& Computer Engineering, University of Delaware, Newark, DE, USA.  E-mail: \texttt{mpicolle@udel.edu}}}
\date{}
\newcommand{\into}{\rightarrow}
\newcommand{\ep}{\varepsilon}
\newcommand{\bin}[2]{\binom{#1}{#2}}
\newcommand{\pr}[1]{\mbox{Pr}\left(#1\right)}
\newcommand{\ev}[1]{\mathbb{E}(#1)}
\newcommand{\lp}{\left(}
\newcommand{\rp}{\right)}
\newtheorem{theorem}{Theorem}
\newtheorem{lemma}{Lemma}
\newtheorem{corollary}{Corollary}
\newtheorem{claim}{Claim}
\newtheorem{conjecture}{Conjecture}
\newcommand{\ca}[1]{{\cal #1}}
\newcommand{\gd}{\ca{G}_i \land \ca{H}_i}
\newcommand{\wt}[1]{\widetilde{#1}}
\newcommand{\wtY}{\widetilde{Y}}
\begin{document}

\maketitle

\begin{abstract}
Let $K_4^-$ denote the diamond graph, formed by removing an edge from the complete graph $K_4$.  We consider the following random graph process: starting with $n$ isolated vertices, add edges uniformly at random provided no such edge creates a copy of $K_4^-$.  We show that, with probability tending to $1$ as $n\into \infty$, the final size of the graph produced is $\Theta(\sqrt{\log(n)}\cdot n^{3/2})$.   Our analysis also suggests that the graph produced after $i$ edges are added resembles the random graph, with the additional condition that the edges which do not lie on triangles form a random-looking subgraph.
\end{abstract}

\section{Introduction} \label{sec:intro}

The $H$-free process, where $H$ is a fixed graph, is the random graph process which begins with a graph $G_0$ on $n$ isolated vertices.  The graph $G_i$ is then formed by adding an edge $e_i$ selected uniformly at random from the pairs which neither form edges of $G_{i-1}$ nor create a copy of $H$ in $G_{i-1} + e_i$.  The process terminates with an $H$-free graph $G_M$ with $M=M(H)$ edges.

The earliest result on an $H$-free process is that of Ruci\'nski and Wormald \cite{RW}, who showed the maximum-degree $d$ process terminates in a graph with $\lfloor nd/2\rfloor$ edges (here $H$ is the star with $d+1$ leaves) with high probability.  (We say a sequence of events $A_n$ occurs \textbf{with high probability}, or simply \textbf{w.h.p.}, if $\lim_{n \into \infty} \pr{A_n} = 1$.) Erd\H os, Suen and Winkler considered the triangle-free and odd-cycle-free processes, showing that w.h.p. the odd-cycle-free process terminates with $\Theta(n^2)$ edges, and that w.h.p. $M(K_3)= \Omega(n^{3/2})$ and $M(K_3)= O(\log(n)\cdot n^{3/2})$, as well as bounds on the independence number of $G_{M(K_3)}$.

The more general $H$-free process was considered by Bollob\'as and Riordan \cite{BR} and by Osthus and Taraz \cite{OT}.  For a graph $G$, let $e(G)$ denote the number of edges and $v(G)$ the number of vertices.  We say a graph $H$ is \textbf{$2$-balanced} if \[\frac{e(H)-1}{v(H)-2} \ge \frac{e(F)-1}{v(F)-1}\] for all proper subgraphs $F$ of $H$ with $v(F) \ge 3$, and \textbf{strictly} $2$-balanced if the inequality is sharp for all such $F$.  Bollob\'as and Riordan produced lower bounds on the $H$-free process for strictly $2$-balanced $H$, as well as upper bounds matching the lower bounds up to a polylog factor for $H \in \{K_4,C_4\}$.  Osthus and Taraz then produced upper bounds matching, to a polylog factor, the lower bounds for the strictly $2$-balanced case.  Wolfovitz \cite{Wolf} considered the case of regular strictly $2$-balanced $H$, producing an improvement in the expected value of the final size that yielded new lower bounds on the Tur\'an numbers $ex(n,K_{r,r})$ for $r \ge 5$.

Finally, Bohman \cite{B} succeeded in showing w.h.p. $M(K_3) = \Theta(\sqrt{\log(n)}\cdot  n^{3/2})$, confirming a conjecture of Spencer \cite{Sp}, and that the independence number of $G_{M(K_3)}$ is of order $O(\sqrt{\log(n)}\cdot n^{1/2})$, through an application of the differential equations method (we refer the reader to \cite{W} for the general method and examples), and consequently produced lower bounds on the Ramsey number $R(3,t)$ matching Kim's celebrated result \cite{K}.  Bohman also produced a lower bound on $M(K_4)$ of order $\Omega(\sqrt[5]{\log(n)} n^{8/5})$ as well as bounds as the independence number, improving the best known lower bounds on $R(4,t)$.  Bohman and Keevash \cite{BK} then produced new lower bounds for the size of the $H$-free process for strictly $2$-balanced $H$, as well as bounds on the independence number for a range of $H$.  An immediate consequence of their work was an improvement on the asymptotic lower bounds for the Ramsey numbers $R(s,t)$, $s \ge 5$ and $t$ sufficiently large, as well as for the Tur\'an numbers $ex(n,K_{r,r})$, $r \ge 5$.

We emphasize that the majority of the work mentioned so far has treated the strictly $2$-balanced case.  We are aware of no nontrivial lower bounds (see Theorem $1$ of \cite{OT}) for the general case when $H$ is $2$-balanced.  For the diamond graph $K_4^-$, we have \[\frac{e(K_4^-)-1}{v(K_4^-)-2} = 2 = \frac{e(K_3)-1}{v(K_3)-2},\] so $K_4^-$ is $2$-balanced but not strictly so.  On the other hand, it is reasonable to suspect that the final size of the diamond-free process is of the same order of magnitude as that of the triangle-free process, and our main result is that this is the case.

\begin{theorem}\label{thm:finalsize}
With high probability, \[M(K_4^-) = \Theta(\sqrt{\log(n)}\cdot n^{3/2}).\]
\end{theorem}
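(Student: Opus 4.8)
The plan is to carry out a differential-equations-method analysis of the diamond-free process, in the style of Bohman's treatment of the triangle-free process \cite{B} (cf.\ \cite{W}), the essential new feature being that one must also track which edges lie on triangles. Throughout write $p=p(i)=2i/n^2$ and $t=t(i)=in^{-3/2}$, so that in $G_i$ a typical pair of vertices should have codegree about $np^2=4t^2$ and a typical vertex degree about $np=2t\sqrt{n}$. Since $G_{i-1}$ is diamond-free, its triangles are edge-disjoint (two triangles sharing an edge would already form a $K_4^-$), and a non-edge $uv$ fails to be addable at step $i$ precisely when one of the following holds: (C1) $u$ and $v$ have at least two common neighbours; (C2) $u$ and $v$ have a common neighbour $w$ with $uw$ on a triangle of $G_{i-1}$; or (C3) the same with $u$ and $v$ interchanged. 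Hence the set $O(i)$ of open pairs consists of the non-edges of codegree $0$, together with the non-edges of codegree exactly $1$ whose unique common neighbour $w$ has both of $uw$ and $vw$ triangle-free. Set $Q(i)=|O(i)|$; observe that adding an edge always removes that pair from $O$ and hence strictly decreases $Q$.

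First I would fix the family of variables to be tracked: the degrees $d_i(v)$, the open-degrees $q_i(v)=|\{u:uv\in O(i)\}|$ and finer counts of pairs by codegree, and --- to handle the new feature --- the number $N_\triangle(i)$ of triangles of $G_i$ (equivalently, the number $3N_\triangle(i)$ of triangle-edges) together with localized versions recording how many edges near a given vertex or a given edge lie on triangles. The heuristic dynamics are: the added edge is a uniformly random element of $O(i-1)$; with probability equal to the fraction of open pairs of codegree $1$ it creates exactly one triangle, turning three formerly triangle-free edges into triangle-edges, and otherwise it creates none. Writing $\rho(t)$ for the asymptotic probability that a codegree-$1$ pair has both of its cherry-edges triangle-free, one obtains $\ev{Q(i)}\approx\binom{n}{2}e^{-4t^2}(1+4t^2\rho(t))$, with $\rho$ governed by a first-order differential equation driven by the triangle-creation rate. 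This coupled but triangular system has a solution giving a trajectory $Q(i)\approx\tfrac12 n^2 q(t)$ with $q(t)=e^{-4t^2}(1+4t^2\rho(t))=\Theta(e^{-4t^2})$, the polynomial factor being negligible against $e^{-4t^2}$ on the range $t=O(\sqrt{\log n})$ that matters.

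Next I would show these variables follow their trajectories with high probability, by the usual supermartingale argument: conditioning on a ``good'' event that $G_{i-1}$ resembles $G(n,p)$ --- maximum degree $O(\sqrt{\log n}\,\sqrt{n})$, codegrees concentrated near $4t^2$, and the triangle-edges forming a pseudorandom subgraph --- one bounds the one-step changes and applies a Freedman-type inequality, exploiting that the process is self-correcting (if $Q$ dips below trajectory, fewer pairs are closed per step, which pushes it back up, and symmetrically). This simultaneously yields the structural description promised in the abstract and the estimate $Q(i)=\Theta(n^2 e^{-4i^2/n^3})$, valid while the right-hand side is at least, say, $n^{\ep}$. In particular $Q(i)>0$ for all $i\le(1/\sqrt{2}-\ep)\sqrt{\log n}\,n^{3/2}$, which gives $M(K_4^-)=\Omega(\sqrt{\log n}\,n^{3/2})$; and carrying the analysis to a step $i_0$ with $Q(i_0)\le n^{\ep}$ (which happens once $4t^2$ exceeds $2\log n$, i.e.\ near $i=\tfrac{1}{\sqrt{2}}\sqrt{\log n}\,n^{3/2}$) and using that every further added edge decreases $Q$ by at least one, at most $n^{\ep}$ more edges can be added, so $M(K_4^-)\le i_0+n^{\ep}=O(\sqrt{\log n}\,n^{3/2})$.

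I expect the main obstacle to be controlling the triangle structure precisely enough. In the triangle-free process the host graph is globally triangle-free, which keeps the codegree bookkeeping clean, whereas here triangles are present and enter the very definition of ``open'': whether a codegree-$1$ pair is open depends on the triangle-status of two specific incident edges, so one must track not merely how many edges lie on triangles but, in a pseudorandom way, where they are, and show that this pseudorandomness persists under the process. This is what forces the extra layer of variables beyond the Bohman \cite{B} / Bohman--Keevash \cite{BK} template and produces the coupled ODE for $\rho$. The underlying reason is precisely the point noted before the theorem: $K_4^-$ is $2$-balanced but not strictly so, with $K_3$ equally dense, so the triangles are not a lower-order perturbation to be absorbed into error terms but a genuine structure that must be followed on its own.
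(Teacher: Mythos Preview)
Your lower-bound plan is essentially the paper's: track a family of local variables (the paper uses $Q_j$, $|X_{j,uv}|$, $|Y_{j,k,uv}|$, with the triangle-free edge density encoded by a function $r(t)$ satisfying its own ODE) via the Bohman--Keevash differential-equations lemma, and conclude $Q(i)>0$ for $i\le\mu\sqrt{\log n}\,n^{3/2}$. Your identification of the new difficulty --- that whether a codegree-$1$ pair is open depends on the triangle-status of two specific edges, forcing an extra layer of tracked variables and a coupled ODE --- is exactly right.

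The upper bound, however, has a real gap. You want to run the trajectory analysis until $Q(i_0)\le n^{\ep}$ and then finish by monotonicity of $Q$, but the error functions in this method grow like $e^{K(t^2+t)}n^{-1/6}$ for a \emph{large} constant $K$ (the trend hypothesis forces $K$ to dominate all the absolute constants arising in the one-step change estimates), so concentration is only valid for $t\le\mu\sqrt{\log n}$ with $\mu$ of order $1/K$ --- nowhere near the $t\approx\tfrac{1}{\sqrt2}\sqrt{\log n}$ you would need to drive $n^2e^{-4t^2}$ below $n^{\ep}$. Pushing such an analysis to the end of the process is a substantial problem in its own right (cf.\ \cite{WA2}, \cite{Wolf2} for $K_4$), and the paper does not attempt it. Instead it exploits the structure of $K_4^-$: in a diamond-free graph every neighbourhood induces a matching, so $\Delta(G_M)\le 2\alpha(G_M)$ and hence $M\le n\alpha(G_M)$. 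The paper then proves (Theorem~\ref{thm:indepno}) that already at step $m=\mu\sqrt{\log n}\,n^{3/2}$ --- well within the range where the trajectory is controlled --- every set of $\gamma\sqrt{n\log n}$ vertices contains a (blue) edge, by tracking the number of ``weakly open'' pairs inside each candidate set and taking a union bound over sets. That independence-number argument, not extending the trajectory, is the missing idea in your upper bound.
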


\noindent Very recently, Warnke \cite{WA2} and Wolfovitz \cite{Wolf2} have independently established upper bounds on $M(K_4)$ that match Bohman's lower bound to within a constant factor, but for most graphs $H$ containing a cycle, determining the final asymptotic size of the $H$-free process remains open.

Our argument consists of adapting the approach of \cite{B} for the triangle-free process to the diamond-free process.  We note that a graph $G$ is diamond-free if and only if every edge of $G$ lies on at most one triangle.  It follows that the edges in the neighborhood of any given vertex form a matching, and consequently $\alpha(G) \ge \Delta(G)/2$: bounding the independence number yields a bound on the final size of the graph constructed by the process.

As an additional comparison to the triangle-free process, suppose that as we progress through the diamond-free process, we color the $i$th edge blue if its addition does not create a triangle, and green otherwise.  Let $G_{blue}$ and $G_{green}$ denote the blue and green subgraphs of $G_M$.  Clearly $G_{blue}$ is triangle-free, and $e(G_{blue}) \ge \frac{2}{3}e(G_M)$.  Our arguments imply the following bound on the independence number of $G_{blue}$:

\begin{theorem}\label{thm:blue}
There is a constant $\gamma>0$ such that, with high probability, $\alpha(G_{blue}) < \gamma \sqrt{\log(n)\cdot n}$.
\end{theorem}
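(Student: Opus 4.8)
The plan is to adapt Bohman's argument bounding the independence number of the triangle-free process \cite{B}. Fix a set $U$ of $u=\lceil \gamma\sqrt{\log(n)\cdot n}\,\rceil$ vertices, put $t=in^{-3/2}$, and let $Y_U(i)$ be the number of pairs $\{x,y\}\subseteq U$ with $xy\notin G_i$ and with no common neighbour in $G_i$. The starting observation is elementary: adding such a pair creates no triangle, and also no $K_4^-$, since in any copy of $K_4^-$ the two endpoints of every edge have a common neighbour inside the copy, so a copy of $K_4^-$ through a newly added edge $xy$ would force $x$ and $y$ to have had a common neighbour beforehand. Hence these are precisely the open pairs of $G_i$ inside $U$ whose selection produces a \emph{blue} edge; writing $Q_i$ for the number of open pairs of $G_i$, the conditional probability that step $i+1$ creates a blue edge inside $U$ given that no earlier step did is $Y_U(i)/Q_i$, and $U$ is independent in $G_{blue}$ precisely when this never occurs.

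I would then track $Y_U$ by the differential equation method. A pair leaves $Y_U$ only when an added edge joins a neighbour of $x$ to $y$ (or symmetrically); estimating the number of such edges by means of the good events $\gd$ from the proof of Theorem~\ref{thm:finalsize} gives $\ev{Y_U(i+1)-Y_U(i)\mid\ca{F}_i}=-(1+o(1))\frac{8i}{n^3}Y_U(i)$, so that $Y_U(i)=(1+o(1))\binom{u}{2}e^{-4t^2}$. From the same analysis I would also use that the open pairs with no common neighbour dominate, i.e.\ $Q_i=\Theta(n^2e^{-4t^2})$: being ``green-open'' requires $\{x,y\}$ to have exactly one common neighbour $w$ with neither $xw$ nor $yw$ on a triangle, and since an edge of $G_i$ already lies on a triangle with probability $\approx 1-e^{-4t^2}$, such pairs form a term of strictly smaller order. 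It follows that $Y_U(i)/Q_i\ge u^2/(Cn^2)$ throughout, so with $M=\Theta(\sqrt{\log n}\,n^{3/2})$ we get $\pr{U\text{ independent in }G_{blue}}\le \exp(-(1-o(1))\sum_{i<M}Y_U(i)/Q_i)=\exp(-\Omega(Mu^2/n^2))=\exp(-\Omega(\gamma^2\sqrt{n}(\log n)^{3/2}))$. Since $\binom{n}{u}\le \exp(O(\gamma\sqrt{n}(\log n)^{3/2}))$, choosing $\gamma$ a sufficiently large constant and taking a union bound over all $U$ completes the proof.

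The main obstacle is proving that $Y_U$ concentrates strongly enough for this union bound: one needs $Y_U(i)\ge(1-o(1))\binom{u}{2}e^{-4t^2}$ for all $i$ simultaneously with failure probability $e^{-n^{\Omega(1)}}$ for each fixed $U$ (only $o(1)$ in total is lost to $\neg(\gd)$). The quadratic variation of $Y_U$ is comfortably small --- of order $n$ up to polylogarithmic factors, because a fixed $x\in U$ has only about $ue^{-4t^2}$ surviving partners and codegrees are controlled by $\gd$ --- but a single step can decrease $Y_U$ by as much as a vertex degree, of order $\sqrt{\log n}\,\sqrt{n}$, which is too large for a crude Freedman-type bound. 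As in \cite{B}, one handles this by a stopping-time refinement, running the martingale estimate only until the first step at which some vertex acquires atypically many neighbours in $U$ and disposing of that contribution separately; this bookkeeping is the delicate part of the argument, everything else resting on estimates already in place for Theorem~\ref{thm:finalsize}.
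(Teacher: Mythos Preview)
Your overall architecture is right and matches the paper's: track the codegree-$0$ pairs inside a candidate set $U$, show this count stays of order $q_0(t)\,|U|^2$, derive a per-step hitting probability $\Theta(u^2/n^2)$, and union-bound over all $U$. You also correctly isolate the real difficulty: a single step can delete up to a full vertex degree's worth of pairs from $Y_U$, so a naive martingale bound is not strong enough to survive the union over $\binom{n}{u}$ sets.

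Where the proposal has a gap is in the resolution of that difficulty. The fix you sketch---stop the martingale the first time some vertex has atypically many neighbours in $U$ and ``dispose of that contribution separately''---does not work as stated, and it is not what \cite{B} or the present paper actually does. For a \emph{fixed} set $U$ of this size, vertices with many neighbours in $U$ \emph{do} appear (the paper's $L_U'(i)$ is nonempty; only its size is controlled, $|L_U'(i)|\le O(n^\rho)$), so the stopping time is hit with probability close to $1$ and freezing $Y_U$ there gives no information about later steps. What is needed is not a stopping time but a modified variable with intrinsically bounded increments. The paper defines \emph{weakly open} pairs $W_A$: pairs in $\binom{A}{2}$ whose common neighbours all lie in $L_A$. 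A pair only leaves $W_A$ when it acquires a common neighbour \emph{outside} $L_A'$, i.e.\ a vertex with fewer than $k/n^\rho$ neighbours in $A$, so $|W_A^-|\le 2k/n^\rho+1$ deterministically and Lemma~\ref{lem:demeth} yields $|W_A(i)|=(q_0(t)\pm n^{-1/20}\theta_y)|A|^2$ with the required failure probability. The cost is that weakly open pairs need not lie in $O_{0,i}$; the paper recovers genuine codegree-$0$ pairs via a separate combinatorial step, partitioning $I$ into $A\cup B\cup C$ so that every vertex of $L_I$ misses at least one of $A,B$, whence $W_I\cap(A\times B)\subseteq O_{0,i}$, and then bounding $|W_I\cap(A\times B)|$ from below using $|W_I|-|W_{A\cup C}|-|W_{B\cup C}|$. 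This partition argument is the piece your sketch is missing.

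One minor correction: your claim that green-open pairs are ``of strictly smaller order'' is false. From Theorem~\ref{thm:odesol}, $q_1(t)=4r(t)^2 q_0(t)$ with $r(t)\to 1/(2\sqrt{2})$, so $Q_1(i)$ is of the same order as $Q_0(i)$ for large $t$. This does not damage the argument, since all you need is $Q(i)=\Theta(n^2 e^{-4t^2})$, which holds regardless; but the justification you give for it is wrong.
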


\noindent Note that Theorem \ref{thm:blue} implies the upper bound in Theorem \ref{thm:finalsize}.\\

\subsection{Comparison with the Strictly Dense Case}

A fascinating idea underlying the analysis of the $H$-free process for strictly $2$-balanced $H$ is that the graph $G_i$ produced after $i$ edges are added resembles the random graph $G(n,i)$, chosen uniformly at random from all graphs on $n$ vertices with $i$ edges, for $i$ up to the point where the number of copies of $H$ in $G(n,i)$ is roughly the same as the number of edges.  For strictly $2$-balanced $H$, this is when $i \approx n^{2-(v(H)-1)/(e(H)-1)}$.  Lending support to this intuition, Bohman and Keevash \cite{BK} show that the number of copies of a given $H$-free graph in $G_i$ is roughly the same as in $G(n,i)$, for $i$ up to a multiple of $n^{2-(v(H)-1)/(e(H)-1)}\log(n)^{1/(e(H)-1)}$.  (Similar results for the triangle-free process were obtained by Wolfovitz \cite{Wolf3}).  Extending results of Gerke and Makai \cite{GM} for the triangle-free process, Warnke \cite{WA1} showed that there are constants $c_1,c_2$ depending only on $H$ so that, w.h.p., the graph $G_{M(H)}$ contains no subgraphs on at most $n^{c_1}$ vertices with density at least $c_2$.  As a result of our analysis, one can modify Warnke's approach to show the existence of constants $c_1$ and $c_2$ so that the same conclusion holds for $G_{M(K_4^-)}$; we leave these details to the interested reader.

We take the view that the graph $G_i$ produced by the diamond-free process resembles the random graph $G(n,i)$, or, to simplify our view, the binomial random graph $G(n,p)$, where $p=p(i) = 2i/n^2$, but with an additional property that the edges which do not lie on triangles of $G_i$ form a random-looking subgraph.  For $p = 2\sqrt{\log(n)}/\sqrt{n}$, for example, w.h.p. every pair of vertices in $G(n,p)$ has codegree at least $2$ and consequently every pair of vertices that is not an edge would create a diamond if added, suggesting our upper bound on $M(K_4^-)$.  On the other hand, if a pair of vertices $uv$ has codegree $0$, or has codegree $1$ but neither edge connecting $u$ and $v$ to their mutual neighbor lies on a triangle, then the addition of $uv$ to the graph would \textit{not} create a diamond.  The edges which do not lie on triangles play an important role in our analysis, but it is reasonable to suspect that they are uniformly distributed among the edges.

We mention that there is a more substantial difference between the diamond-free process and strictly $2$-balanced case.  For the general $H$-free process, we say a pair of vertices $uv$ in $G_i$ is \textbf{open} if it does not form an edge of $G_i$ and $G_i + uv$ is $H$-free.  If a pair $uv$ is not an edge of $G_i$ and is not open, we say it is \textbf{closed}.  Bohman and Keevash's analysis (see Lemma 6.1 and Corollary 6.2 in \cite{BK}) shows that in the early evolution of the process for strictly $2$-balanced $H$, the open pairs effectively look the same, in the following sense: if $uv,u'v'$ are distinct open pairs in $G_i$, then the probability that $uv$ becomes closed by the choice of $e_{i+1}$ is nearly the same as the probability $u'v'$ becomes closed, up to relative error terms that tend to $0$ as $n \into \infty$.  In the diamond-free process, as one might suspect, it will turn out that open pairs whose addition would create a triangle are more likely to become closed than those whose addition would not.  This difference will have a nontrivial effect on our continuous approximation for the number of open pairs in a given step.

\subsection{Random Variables} \label{sec:defs}

Let $[n] = \{1,2,\ldots,n\}$ be the vertex set of the process, and $G_i$ be the graph given by the first $i$ edges selected by the process.  $G_i$ partitions $\bin{[n]}{2}$ into three sets: $E_i,O_i,C_i$.  The set $E_i$ is simply the edge set of $G_i$.
We say a pair $uv \in \bin{[n]}{2}\setminus E_i$ is \textbf{open}, and $uv \in O_i$, if $G_i + uv$ is diamond-free; otherwise $uv$ is \textbf{closed} and $uv \in C_i$.

As any pair with codegree at least $2$ in $G_i$ must be closed, we partition $E_i$ into $E_{0,i} \cup E_{1,i}$ and $O_i$ into $O_{0,i} \cup O_{1,i}$, where pairs in $E_{j,i}$ and $O_{j,i}$ have codegree $j$ in $G_i$.  For $j \in \{0,1\}$ let $Q_j(i) = |O_{j,i}|$ and let $Q(i) = |O_i|$.

Viewing $K_4^-$ as having a ``central" edge of codegree $2$ and four ``outer" edges of codegrees $1$, it follows easily that $O_{0,i}$ is the set of all pairs in $\bin{[n]}{2}\setminus E_i$ of codegree $0$.  It also follows that a pair $uv \in \bin{[n]}{2}\setminus E_i$ of codegree $1$ lies in $O_{1,i}$ if and only if the edges connecting $u$ and $v$ to their mutual neighbor lie in $E_{0,i}$. Returning briefly to the blue-green coloring introduced in the previous section, we also note that edge $e_i$ is blue if and only if $e_i \in O_{0,i-1}$.  Thus, every edge in $E_{0,i}$ is blue, but every triangle in $G_i$ contains two blue edges and a single green edge.

We introduce the continuous time variable $t$ and relate it to the process by setting $t = t(i) = i/n^{3/2}$.  As mentioned above, we take the heuristic view that the graph $G_i$ produced by the process resembles the random graph $G_{n,p}$, where $p=p(t)=2t/\sqrt{n}$, with the exception that it contains no diamonds.  We view the edges that do not lie on triangles, $E_{0,i}$, as forming a random subgraph with $r(t)n^{3/2}$ edges.  Loosely speaking, we select each edge with probability $p(t)$, and from those edges we select those which do not lie on triangles with probability $r(t)/t$.

If we suppose that the variables are close to their expected values, this suggests that \[|O_{0,i}| \approx \lp 1 - \lp\frac{2t}{\sqrt{n}}\rp^2 \rp^{n-2} \bin{n}{2} \approx \frac{e^{-4t^2}}{2} \cdot n^2,\] and by the comments above, \[|O_{1,i}|\approx (n-2) \lp \frac{2r(t)}{\sqrt{n}} \rp^2 \lp 1 - \lp \frac{2t}{\sqrt{n}}\rp^2 \rp^{n-3} \bin{n}{2} \approx \frac{4r(t)^2 e^{-4t^2}}{2}\cdot n^2.\]

To determine $r=r(t)$, we suppose that it is differentiable: assuming $|E_{0,i}| \approx r(t)n^{3/2}$, this implies $|E_{0,i+1}| - |E_{0,i}| \approx r'(t)$.  Of course, when we add an edge from $O_{0,i}$, $|E_{0,i}|$ increases by $1$, while when we add an edge from $O_{1,i}$, $|E_{0,i}|$ decreases by $2$, so the expected one-step change in $|E_{0,i}|$ is approximately \[ \frac{Q_0 - 2 Q_1}{Q_0 + Q_1} \approx \frac{1 - 8r^2}{1 + 4r^2}.\] This suggests the differential equation
\begin{equation}\label{eq:oder}
\frac{dr}{dt} = \frac{1-8r^2}{1 + 4r^2}
\end{equation} with initial condition $r(0)=0$, which has the following implicit solution:
\begin{equation}\label{eq:solr}
8t + 4r(t) - 3\sqrt{2}\cdot\text{arctanh}(2\sqrt{2}\cdot r(t)) = 0.
\end{equation}

\noindent It follows from \eqref{eq:oder} and \eqref{eq:solr} that $r(t)$ is nondecreasing and $\lim_{t \into \infty} r(t) = 1/2\sqrt{2}$.  Presuming these approximations are valid, this suggests the diamond-free process terminates with $t = \Theta(\sqrt{\log(n)})$, which is precisely what we will show.

To justify these approximations, we introduce the following definitions: for pairs $uv \in \bin{[n]}{2}$, let
\begin{eqnarray*}
X_{uv}(i) &=& \{w \in [n]: uw,vw \in O_i\},\\
Y_{uv}(i) &=& \{w \in [n]: |\{uw,vw\} \cap E_i| = |\{uw,vw\} \cap O_i| = 1\}, \text{ and }\\
Z_{uv}(i) &=& \{w \in [n]: uw,vw \in E_i\}.
\end{eqnarray*}  We call vertices in $X_{uv}(i)$ \textbf{open} with respect to $uv$, vertices in $Y_{uv}(i)$ \textbf{partial} with respect to $uv$, and vertices in $Z_{uv}(i)$ \textbf{complete} with respect to $uv$.  Additionally, if $w \in Y_{uv}(i)$ and, say, $uw \in O_i$, we say the pair $uw$ is \textbf{partial} with respect to $uv$, and we write $\wt{Y}_{uv}(i)$ for the set of partial pairs.

Similar to the analysis of the triangle-free process, we will use the partial vertices to track the changes to the open pairs, and the open vertices to track the changes to the partial vertices.  To do so effectively, we will need to further partition $X_{uv}$ and $Y_{uv}$, and we will first give some motivation for that.

Suppose we have $G_i$ and $e_{i+1}$ is selected.  If $e_{i+1} \in O_{0,i}$, then the only open pairs which become closed connect $e_{i+1}$ to $Y_{e_{i+1}}(i)$ - but not every such pair becomes closed.  For $w \in Y_{e_{i+1}}(i)$, we have $w \in Y_{e_{i+1}}(i+1)$ if and only if the edge connecting $e_{i+1}$ to $w$ lies in $E_{0,i}$ and the open pair lies in $O_{0,i}$.  As the addition of $e_{i+1}$ did not create a triangle, both $e_{i+1}$ and the edge connecting $w$ to $e_{i+1}$ lie in $E_{0,i+1}$, while the open pair connecting $w$ to $e_{i+1}$ lies in $O_{1,i+1}$.

On the other hand, if $e_{i+1} = uv \in O_{1,i}$, then letting $z$ be the unique common neighbor of $u$ and $v$ in $G_i$, the open pairs which become closed are all of those which connect $e_{i+1}$ to $Y_{e_{i+1}}$, as well as all open pairs connecting $uz$ to $Y_{uz}(i)$ and $vz$ to $Y_{vz}(i)$: $uv$, $uz$, and $vz$ then lie in $E_{1,i+1}$.

Consequently, we define the following subsets of $X_{uv}$: for $uv \in \bin{[n]}{2}$, let
\begin{eqnarray*}
X_{0,uv}(i) &=& \{w \in X_{uv}(i) \mid uw,vw \in O_{0,i}\},\\
X_{1,uv}(i) &=& \{w \in X_{uv}(i) \mid |\{uw,vw\}\cap O_{0,i}| = 1\}, \text{ and }\\
X_{2,uv}(i) &=& \{w \in X_{uv}(i) \mid uw,vw \in O_{1,i} \text{  and  } Z_{uw}(i) \cap Z_{vw}(i) = \emptyset\}.
\end{eqnarray*}
We note that these sets do not necessarily partition $X_{uv}$. However, if $w \in X_{uv}(i)$ with $Z_{uw}(i) \cap Z_{vw}(i) \ne \emptyset$, then $u,v,w$ lie in the neighborhood of some vertex $z$: the addition of either $uw$ or $vw$ in a later step implies the other is closed, so those $w$ will never lie in $Y_{uv}$.

We partition $Y_{uv}(i)$ into sets $Y_{j,k,uv}(i)$ for $j,k \in \{0,1\}$ as follows: for $(j,k) \ne (0,0)$, let \[Y_{j,k,uv}(i) = \{w \in Y_{uv}(i) : |\{uw,vw\} \cap E_{j,i}| = 1 = |\{uw,vw\} \cap O_{k,i}|\},\] i.e. the codegree of the edge connecting $w$ to $uv$ is $j$ and of the open pair connecting $w$ to $uv$ is $k$.  Let $Y_{0,0,uv}(i)$ be the set of $w \in Y_{uv}(i)$ such that the edge connecting $uv$ to $w$, say $uw$, lies in $E_{0,i}$ while the open pair connecting $uv$ to $w$, in this case $vw$, satisfies $Z_{vw}(i) \setminus \{u,v\} = \emptyset$.

The apparent disparity in the definition of $Y_{0,0,uv}$ is to allow us to keep track of vertices partial to edges in $E_{0,i}$.  If $uv \notin E_i$, then $w \in Y_{0,0,uv}(i)$ if and only if the codegrees of both the edge and the open pair connecting $w$ to $uv$ are $0$, while if $uv \in E_{0,i}$, then $Y_{uv}(i) = Y_{0,0,uv}(i)$.  We also note that contribution to $Y_{0,0,uv}(i)$ can only come from $X_{0,uv}$.

We will only track the random variables $|X_{1,uv}|$, $|X_{2,uv}|$ and $|Y_{j,k,uv}|$, $(j,k) \ne (0,0)$, over pairs $uv \notin E_i$, and we will only track $|X_{0,uv}|$ and $|Y_{0,0,uv}|$ over pairs not in $E_{1,i}$.  Formally, we will set $X_{1,uv}(i) = X_{1,uv}(i-1)$ if $uv \in E_i$, etc., which ensures the sets we are considering are well-defined throughout the process.
Finally, as above, we will use the notation $\wt{Y}_{j,k,uv}(i)$ to refer to the set of partial pairs connecting $uv$ to $Y_{j,k,uv}(i)$.

\subsection{Trajectory Derivation} \label{sec:trajderiv}

Our results will follow from an application of the differential equations method.  The general idea of the method is as follows: suppose we have a collection of sequences of random variables, and we can express their one-step expected changes in terms of the variables themselves.  This yields a system of ordinary differential equations, and we argue that the variables are tightly concentrated around the trajectory given by the solution of the system.  We mention that the arguments in this section will also be primarily heuristic, but they will be rigorously verified in later sections.

The variables we will track are the $Q_j$, $|X_{j,uv}|$, and $|Y_{j,k,uv}|$.  We begin by supposing that they are well-approximated by smooth functions $q_j, x_j, y_{j,k}$ as follows: $Q_j(i) \approx q_j(t)n^2$, $|X_{j,uv}(i)| \approx x_j(t)n$, and $|Y_{j,k,uv}(i)| \approx y_{j,k}(t)\sqrt{n}$ for all $uv \notin E_i$, and the approximations hold for $|X_{0,uv}(i)|$ and $|Y_{0,0,uv}(i)|$ for all $uv \in E_{0,i}$ as well.  Let $q = q_0 + q_1$ and $y = \sum_{j,k} y_{j,k}$: it follows that $Q(i) \approx q(t)n^2$ and $|Y_{uv}(i)| \approx y(t)\sqrt{n}$ for pairs $uv \notin E_i$.

We also assume that for given distinct pairs $uv$ and $wz$, the intersection of $\wt{Y}_{uv}(i)$ and $\wt{Y}_{wz}(i)$ is sufficiently small that the probability that $e_{i+1}$ lies in both is negligible.  If our variables follow the expected trajectories closely enough, then as $t(i+1) = t(i) + \frac{1}{n^{3/2}}$, the conditional expected one-step changes should be \begin{eqnarray*}
\ev{Q_j(i+1)-Q_j(i)} &\approx& \frac{dq_j}{dt}\cdot \sqrt{n},\\
\ev{|X_{j,uv}(i+1)|-|X_{j,uv}(i)|} &\approx& \frac{dx_j}{dt}\cdot \frac{1}{\sqrt{n}}, \text{ and }\\
\ev{|Y_{j,k,uv}(i+1)|-|Y_{j,k,uv}(i)|} &\approx& \frac{dy_{j,k}}{dt}\cdot \frac{1}{n}.\\
\end{eqnarray*}

We begin with the $Q_j$: suppose $e_{i+1} = uv$.  If $uv \in O_{0,i}$, then the pairs in $\wt{Y}_{uv}(i)\setminus \wt{Y}_{0,0,uv}(i)$ become closed, and the pairs in $\wt{Y}_{0,0,uv}(i)$ move from $O_{0,i}$ to $O_{1,i+1}$.  If $uv \in O_{1,i}$, then, letting $z \in Z_{uv}(i)$, all pairs in $\wt{Y}_{uv}(i) \cup \wt{Y}_{uz}(i) \cup \wt{Y}_{vz}(i)$ become closed.  Consequently,
\begin{eqnarray*}
\ev{Q_0(i+1)-Q_0(i)} &\approx& -|Y_{0,0,uv}(i)| - |Y_{1,0,uv}(i)| \approx -(y_{0,0}(t) + y_{1,0}(t))\sqrt{n}, \text{ and }\\
\ev{Q_1(i+1)-Q_1(i)} &\approx &  -|Y_{0,1,uv}(i)| - |Y_{1,1,uv}(i)|+ \frac{Q_0(i)(y_{0,0}(t)\sqrt{n}) + Q_1(i)(- 2y_{0,0}(t)\sqrt{n})}{Q(i)}\\
&\approx &  \lp-y_{0,1}(t) - y_{1,1}(t) +  \frac{q_0(t)y_{0,0}(t) - 2q_1(t) y_{0,0}(t)}{q(t)}\rp \sqrt{n},
\end{eqnarray*}
suggesting the differential equations \[\frac{dq_0}{dt} = -y_{0,0}-y_{1,0} \ \ \ \text{ and } \ \ \ \frac{dq_1}{dt} = -y_{0,1} - y_{1,1} + \frac{q_0y_{0,0} -2 q_1y_{0,0}}{q_0 + q_1}.\]

Next, we turn to the open and partial vertices.  We explicitly derive the differential equations for $x_0$ and $y_{0,0}$: the remaining equations, which we will state afterwards, fall out along the same lines.  Suppose that $uv \notin E_{1,i}$ and let $w \in X_{0,uv}(i)$.  The probability that $w \notin X_{0,uv}(i+1)$ is roughly $\frac{2y(t)}{q(t)} \frac{1}{n^{3/2}}$, so \[\ev{|X_{0,uv}(i+1)| - |X_{0,uv}(i)|} \approx -|X_{0,uv}(i)|\cdot \frac{2y(t)}{q(t)} \cdot \frac{1}{n^{3/2}} \approx -\frac{2x_0(t)y(t)}{q(t)}\cdot \frac{1}{\sqrt{n}},\] yielding the differential equation $dx_0/dt = -2x_0y/q$.  Similarly, the probability that $w \in Y_{0,0,uv}(i+1)$ is approximately $\frac{2}{q(t)}\cdot \frac{1}{n^2}$.

Now, fix a $w' \in Y_{0,0,uv}(i)$, and suppose $uw' \in E_{0,i}$ and $vw' \in O_i$ (recalling that $vw' \in O_{0,i}$ if $uv \notin E_i$ but $vw' \in O_{1,i}$ if $uv \in E_{0,i}$).  Then the probability that $w' \notin Y_{0,0,uv}(i+1)$ is approximately $\frac{y(t) + y_{0,0}(t)}{q(t)} \cdot\frac{1}{n^{3/2}}$, and consequently
\begin{eqnarray*}
\ev{|Y_{0,0,uv}(i+1)| - |Y_{0,0,uv}(i)|} &\approx& |X_{0,uv}|\frac{2}{q(t)}\cdot \frac{1}{n^2} - |Y_{0,0,uv}(i)|\frac{y(t) + y_{0,0}(t)}{q(t)} \frac{1}{n^{3/2}}\\
&\approx& \lp\frac{2x_0(t)}{q(t)} - \frac{y_{0,0}(t)(y(t) + y_{0,0}(t))}{q(t)} \rp \frac{1}{n},
\end{eqnarray*} yielding the differential equation \[\frac{dy_{0,0}}{dt} = \frac{2x_0 - y_{0,0}(y + y_{0,0})}{q}.\]

Analogous arguments for the remaining variables produce the system
\begin{equation}\label{eq:sysodes}
\begin{split}
\frac{dq_0}{dt} &  = -y_{0,0}-y_{1,0}, \ \ \ \ \ \ \ \ \ \ \frac{dq_1}{dt}  = -y_{0,1}-y_{1,1} + \frac{(q_0 - 2q_1)y_{0,0}}{q},\\
\frac{dx_0}{dt}  = -\frac{2x_0 y}{q},&\ \ \  \ \ \  \frac{dx_1}{dt} = \frac{2x_0y_{0,0} - 2x_1(y+y_{0,0})}{q}, \ \ \ \ \ \ \frac{dx_2}{dt} = \frac{x_1y_{0,0} - 2x_2(y+2y_{0,0})}{q},\\
\frac{dy_{0,0}}{dt} = &\frac{2x_0 - y_{0,0}(y_{0,0} + y)}{q},\ \ \ \ \ \ \ \ \ \ \ \frac{dy_{0,1}}{dt} = \frac{x_1 + y_{0,0}^2 - y_{0,1}(y + 3y_{0,0})}{q},\\
\frac{dy_{1,0}}{dt} =& \frac{x_1 + y_{0,0}^2 - y_{1,0}y}{q},\ \ \ \ \ \text{ and }\ \ \ \frac{dy_{1,1}}{dt} = \frac{2x_2 + y_{0,1}y_{0,0}+ y_{1,0}y_{0,0} - y_{1,1}(y+2y_{0,0})}{q}.\\
\end{split}
\end{equation}

\noindent As $|O_{0,0}| = \bin{n}{2}$, $|O_{1,0}| = 0$, $|X_{0,uv}(0)| = n-2$, and $|X_{j,uv}(0)|=0$ and $|Y_{k,l,uv}(0)| = 0$ for all $uv \in \bin{[n]}{2}$, $j \in \{1,2\}$, and $k,l \in \{0,1\}$, we get the initial conditions
\begin{equation}\label{eq:sysic}
q_0(0)=\frac{1}{2},\ \  q_1(0)=0,\ \  x_0(0)=1,\ \  x_j(0)=0 \text{ and } y_{k,l}(0)=0 \text{ for } j \in \{1,2\},\  k,l \in \{0,1\}.
\end{equation}

\noindent Our earlier estimates using $G_{n,p}$ suggested that $q_0 = e^{-4t^2}/2$ and $q_1 = 4r(t)^2e^{-4t^2}/2$, and thus the probability a pair lies in $|O_{j,i}|$ is approximately $2q_j$.  Applying this approach to the $x_j$ and $y_{k,l}$, this suggests
\[|X_{0,uv}(i)| \approx (n-2)(2q_0)\cdot(2q_0),\] and therefore $x_0 = (2q_0)^2$.  Similarly,  \[|Y_{0,0,uv}(i)| \approx (n-2)\cdot 2 \cdot \lp \frac{2r(t)}{\sqrt{n}}\rp \cdot (2q_0),\] and consequently $y_{0,0} = 2(2r(t))(2q_0)$.  Analogous computations yield
\[x_1 = 2(2q_0)(2q_1), \ \text{ and }\ x_2 = (2q_1)^2,\] as well as
\[y_{0,1} = 2(2r(t))(2q_1),\ \ \ y_{1,0} = 2(2(t-r(t))(2q_0), \ \ \text{ and }\ \ \  y_{1,1} = 2(2(t-r(t))(2q_1).\]

\noindent One can verify that this does (incredibly!) yield the solution to the above system, which we state as a theorem:
\begin{theorem} \label{thm:odesol}
The system \eqref{eq:sysodes} with initial conditions \eqref{eq:sysic} has the unique solution
\begin{equation}\label{eq:soltoodes}
\begin{split}
q_0(t) &= e^{-4t^2}/2,\ \ \ \ \ \ \ \ \ \ \ q_1(t) = 4r(t)^2e^{-4t^2}/2,\\
x_0(t) = e^{-8t^2},& \ \ \ \ \ \ \ x_1(t) = 8r(t)^2e^{-8t^2},\ \ \ \ \ \ \ x_2(t) = 16r(t)^4e^{-8t^2},\\
 y_{0,0}(t) &= 4r(t)e^{-4t^2},\ \ \ \ \ \ \ \ \ \ \ \ \ \ \ \ \ \ \ \ \ y_{0,1}(t) = 16r(t)^3e^{-4t^2},\\
y_{1,0}(t) &= 4(t-r(t))e^{-4t^2}, \ \ \ \ \text{ and } \ \ \ \
y_{1,1}(t) = 16r(t)^2(t-r(t))e^{-4t^2},\\
\end{split}
\end{equation}
where $r(t)$ satisfies \eqref{eq:solr}.
\end{theorem}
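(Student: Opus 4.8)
The statement asserts two things: that the explicit functions \eqref{eq:soltoodes} solve the system \eqref{eq:sysodes}, and that this solution is unique. The plan is to handle uniqueness first and verification second, since uniqueness is the cheap part. Uniqueness follows from the standard Picard--Lindel\"of theorem: the right-hand sides of \eqref{eq:sysodes} are rational functions of $q_0,q_1,x_0,x_1,x_2,y_{0,0},y_{0,1},y_{1,0},y_{1,1}$ whose only denominator is $q = q_0+q_1$, so they are Lipschitz on any region bounded away from $q=0$. Since the proposed solution has $q(t) = q_0(t)+q_1(t) = e^{-4t^2}(1+4r(t)^2)/2 \ge e^{-4t^2}/2 > 0$ for all finite $t$, and since $r(t)$ is a well-defined $C^1$ function on $[0,\infty)$ by \eqref{eq:oder}--\eqref{eq:solr} (with $r$ bounded by $1/2\sqrt{2}$, so all nine functions are smooth and bounded on compact $t$-intervals), the solution is unique on $[0,\infty)$ once we confirm it is a solution at all.

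For the verification, I would first record the two facts about $r$ that make everything collapse: from \eqref{eq:oder}, $r' = (1-8r^2)/(1+4r^2)$, and from the formula for $q_0$ one has $q_0'/q_0 = -8t$. It is cleanest to substitute the ansatz $q_0 = e^{-4t^2}/2$, $q_1 = 4r^2 q_0$, $x_0 = e^{-8t^2} = (2q_0)^2$, $x_1 = 2(2q_0)(2q_1)$, $x_2 = (2q_1)^2$, $y_{0,0} = 4r e^{-4t^2} = 2(2r)(2q_0)$, $y_{0,1} = 2(2r)(2q_1)$, $y_{1,0} = 2(2(t-r))(2q_0)$, $y_{1,1} = 2(2(t-r))(2q_1)$, and also note $y = \sum y_{j,k} = 2\cdot 2t\cdot(2q_0) + 2\cdot 2t\cdot(2q_1) = 8t\,q = 8tq$ wait --- more precisely $y_{0,0}+y_{1,0} = 4te^{-4t^2}$ and $y_{0,1}+y_{1,1} = 16r^2 t e^{-4t^2}$, so $y = 4te^{-4t^2}(1+4r^2) = 8tq$. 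Then one checks each of the nine equations in turn. For instance, $dq_0/dt = -4t e^{-4t^2} = -(y_{0,0}+y_{1,0})$ is immediate; for $dq_1/dt$, differentiate $4r^2 q_0$ using $q_0' = -8tq_0$ and $r'$, and compare with $-(y_{0,1}+y_{1,1}) + (q_0-2q_1)y_{0,0}/q$; the $x$-equations reduce, after dividing through by common exponential factors, to polynomial identities in $r$ and $t$ that use $r'$ once; the $y$-equations are similar. In each case the identity boils down to a single application of the ODE for $r$ together with the relation $y = 8tq$ and $q_1 = 4r^2q_0$, so no genuinely new computation is needed beyond bookkeeping.

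The main obstacle is purely clerical: there are nine coupled equations and the substitutions produce moderately long expressions, so the risk is an arithmetic slip rather than a conceptual gap. The one place requiring a little care is the equation for $dq_1/dt$ (and, analogously, wherever the factor $1/q$ appears), because there the cancellation is not term-by-term --- one must use $q = q_0(1+4r^2)$ to simplify $(q_0-2q_1)y_{0,0}/q = (1-8r^2)\,y_{0,0}/(1+4r^2) = r'\,y_{0,0}/(2r)\cdot\ldots$, i.e. the combinatorial factor $(1-8r^2)/(1+4r^2)$ that appears is exactly $r'$, which is the whole reason the explicit solution exists. I would present the $q_0$, $q_1$, $x_0$, and $y_{0,0}$ checks in full (these exhibit every mechanism that occurs) and remark that the remaining five are verified identically; alternatively one can simply state that the verification is a direct, if tedious, substitution and is omitted, as the paper's phrasing (``one can verify that this does (incredibly!) yield the solution'') already signals.
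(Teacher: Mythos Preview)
Your proposal is correct and matches the paper's approach: the paper gives no proof beyond the remark ``one can verify that this does (incredibly!) yield the solution,'' i.e.\ direct substitution, which is exactly what you outline, and your addition of the routine Picard--Lindel\"of argument for uniqueness is the natural complement. The key simplifying identities you isolate --- $y = 8tq$, $q_1 = 4r^2 q_0$, and the appearance of $r' = (1-8r^2)/(1+4r^2)$ wherever a factor of $1/q$ survives --- are precisely what make the nine verifications collapse, so there is nothing to add.
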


\subsection{Results}\label{sec:introresults}

Now, we formally state our results: let $K$ be a sufficiently large constant, and let
\[\theta_y(t) = e^{K(t^2+t)},\ \ \ \ \theta_q(t) = 1 + \int_0^t K\theta_y(\tau)\ d\tau,\ \ \ \ \ \theta_x(t) = e^{-4t^2}\theta_y(t),\ \text{  and  } \ \theta_r(t)=e^{4t^2}\theta_y(t).\]
Define $\delta_{\kappa} = \theta_{\kappa}\cdot n^{-1/6}$ for $\kappa=y,q,x,r$.  Let $0 < \ep < \frac{1}{40}$, and let $\mu=\mu(\ep,K)>0$ be sufficiently small so that $\theta_y\lp \mu\sqrt{\log(n)}\rp < n^{\ep}$ for all $n$ sufficiently large.  (We note that $\mu = \frac{\ep}{2K}$ suffices.)  Finally, let $m = \mu \sqrt{\log(n)}\cdot n^{3/2}$.\\

First, we show that, for $0 \le i \le m$, the estimates derived in the previous section are valid:

\begin{theorem} \label{thm:trajectory}
With high probability, for all $i$,  $i=0,\ldots,m$,
\begin{eqnarray*}
Q_j(i) &=& (q_j(t) \pm \delta_q(t))n^2 \ \ \text{ for } j \in \{0,1\},\\
|X_{0,uv}(i)| &=& (x_0(t) \pm \delta_x(t))n\ \  \text{ for }uv \notin E_{1,i},\\
|X_{j,uv}(i)| &=& (x_j(t) \pm \delta_x(t))n\ \ \text{ for } j>0 \text{ and }uv \notin E_{i},\\
|Y_{0,0,uv}(i)| &=& (y_{0,0}(t) \pm \delta_y(t))\sqrt{n}\ \  \text{ for }uv \notin E_{1,i},\text{ and }\\
|Y_{j,k,uv}(i)| &=& (y_{j,k}(t) \pm \delta_y(t))\sqrt{n}\ \  \text{ for } j,k \in \{0,1\}, (j,k) \ne (0,0) \text{ and } uv \notin E_{i}.\\
\end{eqnarray*}
\end{theorem}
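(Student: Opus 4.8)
The plan is to prove Theorem~\ref{thm:trajectory} by the differential equations method, verifying that all of the tracked random variables stay within the specified error envelopes throughout the first $m$ steps via a simultaneous martingale/supermartingale argument. Concretely, for each tracked variable $V(i)$ with trajectory $v(t)$ and error term $\delta_v(t)$, one introduces the two ``error-barrier'' sequences
\[
V^+(i) = V(i) - \big(v(t) + \delta_v(t)\big)\,n^{s_v}, \qquad V^-(i) = \big(v(t) - \delta_v(t)\big)\,n^{s_v} - V(i),
\]
where $s_v \in \{2,1,1/2\}$ is the appropriate scaling exponent, and one shows that each of these is a supermartingale until the first time any barrier is crossed. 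Taking a union bound over all $O(n^2)$ pairs $uv$ and all $O(n^{3/2})$ steps, and applying a concentration inequality (Azuma--Hoeffding, or better the Freedman-type inequality used in \cite{B}) to each, gives that w.h.p. no barrier is ever crossed, which is exactly the conclusion.

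The key steps, in order, are as follows. First, establish a ``clean-up'' or a~priori bound: conditioning on the event $\gd$ (in the notation the paper sets up) that the trajectory has held up to step $i$, derive sharp estimates on the one-step changes and on the relevant sub-populations --- in particular on $\wt{Y}_{uv}(i)$, $\wt{Y}_{uz}(i)$, $\wt{Y}_{vz}(i)$, and the overlap between $\wt{Y}$-sets of distinct pairs, since collisions there are what the heuristic ignored. Second, compute the conditional expected one-step change of each $V(i)$ and show it agrees with $v'(t)\,n^{s_v - 3/2}$ up to an error that is $o(\delta_v'(t)\, n^{s_v-3/2})$; this is where the system \eqref{eq:sysodes} and its solution \eqref{eq:soltoodes} from Theorem~\ref{thm:odesol} get used, together with the fact that $\theta_q, \theta_y, \theta_x, \theta_r$ were chosen so that the error feedback terms are dominated by $K\theta_\kappa$. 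Third, bound the one-step changes almost surely (the boundedness / Lipschitz hypothesis): $Q_j$ can change by $O(y\sqrt{n}) = O(\sqrt{\log n}\cdot\sqrt{n})$, while the $|X_{j,uv}|$ and $|Y_{j,k,uv}|$ change by $O(1)$ per step except on rare ``bad'' steps where $e_{i+1}$ is incident to $uv$ or lands in $\wt{Y}_{uv}$. Fourth, assemble the supermartingale inequality for $V^\pm$, showing the drift from the expectation estimate is negative because $\delta_v$ grows strictly faster than the error terms, and apply the concentration bound with the variance controlled by the a~priori estimates from step one. Fifth, take the union bound over all variables, pairs, and steps.

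The main obstacle --- and the place where the diamond-free process genuinely departs from Bohman's triangle-free analysis --- is the treatment of pairs $uv \in O_{1,i}$ and the associated bookkeeping: when such an edge is added, three $\wt{Y}$-sets ($\wt{Y}_{uv}, \wt{Y}_{uz}, \wt{Y}_{vz}$) are affected simultaneously, the codegree-$1$ structure must be tracked through the $X_{1}, X_{2}$ and $Y_{j,k}$ refinements, and one must rule out ``coincidences'' (e.g.\ $Z_{uw}(i)\cap Z_{vw}(i)\neq\emptyset$, or a vertex being partial to two different relevant pairs) with probability stronger than $1 - n^{-2}$ so they survive the union bound. Handling this requires auxiliary concentration estimates on quantities like $|\wt{Y}_{uv}(i)\cap \wt{Y}_{u'v'}(i)|$ and on codegrees of pairs in $E_{0,i}$ versus $E_{1,i}$, and it is the reason the trajectory for $q_1$ (and hence all variables downstream of $y_{0,0}$) carries the extra $(q_0 - 2q_1)y_{0,0}/q$ feedback term. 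A secondary technical point is that $|X_{0,uv}|$ and $|Y_{0,0,uv}|$ must be tracked for $uv \in E_{0,i}$ as well as $uv \notin E_i$, so the induction must be set up to handle a pair crossing from $O_{0,i}$ into $E_{0,i+1}$ without a discontinuity in the relevant estimate --- which works precisely because, as noted in the setup, when $uv\in E_{0,i}$ one has $Y_{uv}(i) = Y_{0,0,uv}(i)$ and the contributing $X$-set is still $X_{0,uv}$.
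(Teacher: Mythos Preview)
Your approach is essentially the paper's: both prove Theorem~\ref{thm:trajectory} by the differential equations method, tracking all nine families of variables simultaneously via supermartingale barriers (the paper invokes the packaged Lemma~7.3 of \cite{BK}, stated here as Lemma~\ref{lem:demeth}, rather than building $V^{\pm}$ by hand, but the mechanics are identical).

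One correction on what you identify as the main obstacle. The paper does \emph{not} need auxiliary concentration estimates on overlaps $|\wt{Y}_{uv}\cap\wt{Y}_{u'v'}|$ or on codegrees by edge-type. Instead it proves the single crude bound $|Z_{uv}(i)|\le\log^2 n$ for all pairs by a one-line union bound (Section~\ref{sec:cverts}); this is the high-probability event $\ca{H}_i$. Conditional on $\ca{H}_i$, Lemma~\ref{lem:bdchange} then gives a \emph{deterministic} $O(\log^2 n)$ bound on every relevant overlap, which is all that is needed in the trend-hypothesis calculations. The same codegree bound also supplies the Lipschitz control: the paper simply uses the uniform worst-case bounds $|X^{\pm}_{j,uv}|\le\tfrac{9}{2}\sqrt{n}$ (since at most $\tfrac{9}{2}\sqrt{n}$ open pairs are affected by any single edge addition) and $|Y^{\pm}_{j,k,uv}|=O(\log^2 n)$ on every step, rather than splitting into ``good'' and ``bad'' steps as you propose. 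Your scheme would work, but it is more labor than is actually required.
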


\noindent As $e^{4t^2} < \theta_y(t) < n^{\ep}$ and $\ep < 1/12$, we have $q(t) \ge \frac{1}{2n^{\ep}}$ and $2\delta_q(t) \le \frac{2n^{\ep}}{n^{1/6}} < q_0(t)/2$ for $n$ sufficiently large and $0 \le i \le m$.  This implies $Q(m)>0$ and thus the lower bound of Theorem \ref{thm:finalsize}.\\

We next show that, with respect to the edges and the open pairs, $G_i$ remains approximately regular throughout:  for $v \in [n]$ and $j\in \{0,1\}$, let \[W_{j,v}(i) = \{w \in [n] : vw \in O_{j,i}\}\ \ \ \text{ and }\ \ \ N_{j,v}(i) = \{w \in [n] : vw \in E_{j,i}\},\] and let $d_{j,v}(i) = |N_{j,v}(i)|$.

\begin{theorem}\label{thm:regularity}
With high probability, for all $v \in [n]$, $j \in \{0,1\}$ and $i=0,\ldots,m$, we have
\begin{eqnarray*}
|W_{j,v}(i)| &=& (2q_j(t)\pm \delta_q(t))n,\\
d_{0,v}(i) &=& (2r(t) \pm \delta_r(t))n^{1/2}, \text{ and}\\
d_{1,v}(i) &=& (2(t-r(t)) \pm \delta_r(t))n^{1/2}.
\end{eqnarray*}
\end{theorem}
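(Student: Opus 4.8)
The natural approach is to run the differential-equations method once more, this time for the per-vertex and per-pair degree variables, \emph{conditioning on the event from Theorem~\ref{thm:trajectory}} that all the global variables $Q_j$, $|X_{j,uv}|$, $|Y_{j,k,uv}|$ already follow their trajectories. Since Theorem~\ref{thm:trajectory} holds w.h.p., we may work on that event and treat $q(t)$, $y(t)$, etc.\ as known to within their error terms $\delta_\kappa$. The point is that the one-step changes of $|W_{j,v}(i)|$, $d_{0,v}(i)$, $d_{1,v}(i)$ all have conditional expectations expressible (up to lower-order error) in terms of quantities already controlled, so no new coupled system is needed---the equations for these variables are \emph{driven} by the trajectory variables rather than feeding back into them.

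\textbf{Key steps, in order.} First I would write down the one-step changes. For $d_{0,v}$: the degree $d_{0,v}$ increases by one when $e_{i+1}$ is an open pair at $v$ of codegree $0$ whose addition creates no triangle, i.e.\ $e_{i+1}\in O_{0,i}$ and $v$ is an endpoint; it \emph{decreases} (by at most a bounded amount, and typically by one or two) when $e_{i+1}$ is chosen so that some edge $vw\in E_{0,i}$ comes to lie on a triangle, which happens exactly when $v\in Y_{e_{i+1}}$ in the appropriate way, or when $e_{i+1}\in O_{1,i}$ completes a triangle through an existing $E_{0,i}$-edge at $v$. Using Theorems~\ref{thm:odesol} and~\ref{thm:regularity}'s own target values one checks that $\ev{d_{0,v}(i+1)-d_{0,v}(i) \mid \mathcal{F}_i} \approx (dr/dt)\cdot 2/\sqrt n$ and similarly $\ev{d_{1,v}(i+1)-d_{1,v}(i)\mid\mathcal{F}_i}\approx 2(1-r')/\sqrt n$, consistent with the claimed trajectories $2r(t)n^{1/2}$ and $2(t-r(t))n^{1/2}$; likewise $\ev{|W_{j,v}(i+1)|-|W_{j,v}(i)|\mid\mathcal F_i}\approx (2q_j'(t))n^{1/2}$, matching $q_j$. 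Second, I would verify the Lipschitz/boundedness hypotheses: a single edge addition changes each of these variables by $O(1)$ in the $O_{0,i}$ case and by $O(\max_{uv}|Y_{uv}|) = O(\sqrt n \cdot \theta_y)$ in the (much rarer) $O_{1,i}$ case---these large jumps are what force the $n^{-1/6}$ scale in $\delta_r$. Third, I would set up the supermartingale: fixing $v$ and $j$, define $A^{\pm}(i) = \pm\big(|W_{j,v}(i)| - (2q_j(t))n\big) - g(t)n$ for a suitable error function $g$ with $g(0)=0$ chosen (as with $\theta_q$) so that $g' $ dominates the accumulated one-step error plus a term absorbing the trajectory's own drift slack; show $A^{\pm}(i)$ is a supermartingale as long as the bound has not been violated, apply a Hoeffding--Azuma / Freedman-type inequality (the version handling the rare large jumps via a bounded-variance argument, exactly as in \cite{B}), and union-bound over the $O(n)$ vertices, the $O(1)$ choices of $j$, and the $O(n^{5/2})$ steps. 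The degree bounds $d_{j,v}$ are handled identically with error function $\delta_r$.

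\textbf{Main obstacle.} The crux is the large-deviation step in the presence of the rare but enormous jumps coming from $e_{i+1}\in O_{1,i}$: a naive Azuma bound with increments of size $\Theta(\sqrt n\,\theta_y)$ is useless, so one must instead bound the one-step \emph{variance}---the probability of selecting an $O_{1,i}$ edge incident to the structure around $v$ is only $O(Q_1/Q)\cdot O(\sqrt n/\ldots)$ small---and invoke a martingale concentration inequality (as in Bohman's treatment of the triangle-free process and in \cite{BK}) that trades the worst-case increment bound for a variance bound plus a crude almost-sure bound. Getting the bookkeeping right so that the accumulated variance is $o\big((\delta_r n^{1/2})^2\big)$ over all $m$ steps, uniformly on the good event, is the delicate part; everything else is a routine, if lengthy, repetition of the method already used for Theorem~\ref{thm:trajectory}. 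A secondary (purely technical) point is that $|W_{j,v}|$, $d_{0,v}$ and $d_{1,v}$ are not independent of the conditioning event, so one must be careful to phrase all the martingale estimates relative to the natural filtration $\mathcal F_i$ of the process and simply intersect with the good event at the end, rather than conditioning on it step-by-step.
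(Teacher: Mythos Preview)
Your overall plan---condition on the event of Theorem~\ref{thm:trajectory}, compute the one-step expected changes of $|W_{j,v}|$ and $d_{j,v}$ in terms of the already-controlled quantities, and apply a martingale/differential-equations argument---is exactly what the paper does (it simply re-applies Lemma~\ref{lem:demeth} with $\ca{H}_i$ including the conclusion of Theorem~\ref{thm:trajectory}). So the high-level approach is right.

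However, your ``main obstacle'' is a phantom, and the diagnosis contains concrete errors. First, the one-step change in $d_{j,v}$ is \emph{never} of order $\sqrt{n}$: a single edge addition creates at most one triangle, so at most two edges move from $E_{0,i}$ to $E_{1,i+1}$, and hence $|d_{j,v}(i+1)-d_{j,v}(i)|\le 2$ deterministically. No Freedman-type variance argument is needed for the degrees. Second, for $|W_{j,v}|$ the jump can indeed be of order $\sqrt{n}$ (in fact at most $\tfrac{9}{2}\sqrt{n}$, the same bound as for the $Q_j$), but this is not tied to whether $e_{i+1}\in O_{1,i}$; it happens whenever $e_{i+1}$ is incident to $v$. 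More to the point, this jump size causes no difficulty: the scaling for $|W_{j,v}|$ is $S_j=n$, and the boundedness hypothesis of Lemma~\ref{lem:demeth} only requires $|W_{j,v}^{\pm}| < n/(s_j^2 k_j n^{\ep}) = n^{7/12}$, which $\tfrac{9}{2}\sqrt{n}$ comfortably satisfies. So a plain Azuma-style application of Lemma~\ref{lem:demeth} works directly; there is no ``crux'' here, and the $n^{-1/6}$ in $\delta_r$ is simply inherited from the choice $s_j=n^{1/6}$ made uniformly across all variables, not forced by any large-jump consideration.

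In short: drop the variance/Freedman machinery, correct the jump bounds ($\le 2$ for $d_{j,v}$, $\le \tfrac{9}{2}\sqrt{n}$ for $|W_{j,v}|$), and the proof becomes a routine reapplication of Lemma~\ref{lem:demeth}, exactly as in the paper.
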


\noindent It follows from Theorem \ref{thm:regularity} that w.h.p., $|E_{0,i}| = (r(t) \pm \frac{\delta_r}{2})n^{3/2}$ for $i=0,\ldots,m$, and we note a second important consequence:

\begin{corollary}\label{cor:maxdeg}
With high probability, $\Delta(G_m) < 2(\mu\sqrt{\log(n)}+\delta_r)n^{1/2}$.
\end{corollary}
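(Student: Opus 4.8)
The plan is to derive Corollary \ref{cor:maxdeg} as an immediate consequence of the degree bounds in Theorem \ref{thm:regularity}. First I would observe that every edge of $G_i$ lies in exactly one of $E_{0,i}$ or $E_{1,i}$, so that for every vertex $v$ and every $i$ we have the exact identity $\deg_{G_i}(v) = d_{0,v}(i) + d_{1,v}(i)$. Applying Theorem \ref{thm:regularity} with $i = m$ (so $t = t(m) = \mu\sqrt{\log(n)}$), on the high-probability event asserted there we get simultaneously
\[
d_{0,v}(m) \le (2r(t) + \delta_r(t))n^{1/2} \quad\text{and}\quad d_{1,v}(m) \le (2(t - r(t)) + \delta_r(t))n^{1/2}
\]
for every $v \in [n]$. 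Adding these two bounds, the $r(t)$ and $-r(t)$ terms cancel, leaving $\deg_{G_m}(v) \le (2t + 2\delta_r(t))n^{1/2} = 2(\mu\sqrt{\log(n)} + \delta_r(t))n^{1/2}$.

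Next I would note that $\delta_r$ as written in the statement of the corollary should be read as $\delta_r(t)$ evaluated at $t = \mu\sqrt{\log(n)}$, and that since this bound holds for every vertex $v$ it is a bound on $\Delta(G_m) = \max_v \deg_{G_m}(v)$; taking the strict inequality is harmless since one may absorb it (for instance, Theorem \ref{thm:regularity} could equally be stated with strict inequality, or one simply notes the trajectory value is never attained exactly with the error term saturated). Since the high-probability event of Theorem \ref{thm:regularity} already has probability tending to $1$, and Corollary \ref{cor:maxdeg} asserts its conclusion on (a subset of) that same event, nothing further is needed: the corollary holds w.h.p.

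There is essentially no obstacle here — the only "content" is the cancellation $2r(t) + 2(t - r(t)) = 2t$, which is exactly why the process tracks $r(t)$ and $t - r(t)$ as the densities of triangle-free and triangle edges separately. If anything the one point requiring a word of care is confirming that $t(m) = \mu\sqrt{\log(n)}$ lies within the range $0 \le i \le m$ for which Theorem \ref{thm:regularity} is valid, which is immediate since $i = m$ is the endpoint. I would therefore write the proof in two or three sentences: invoke Theorem \ref{thm:regularity} at $i = m$, add the two degree estimates, and conclude.

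\begin{proof}
We condition on the high-probability event of Theorem \ref{thm:regularity}, and set $t = t(m) = \mu\sqrt{\log(n)}$. Since every edge of $G_m$ incident to a vertex $v$ lies in exactly one of $E_{0,m}$, $E_{1,m}$, we have $\deg_{G_m}(v) = d_{0,v}(m) + d_{1,v}(m)$ for every $v \in [n]$. By Theorem \ref{thm:regularity},
\[
\deg_{G_m}(v) \le \big(2r(t) + \delta_r(t)\big)n^{1/2} + \big(2(t - r(t)) + \delta_r(t)\big)n^{1/2} = 2\big(t + \delta_r(t)\big)n^{1/2} = 2\big(\mu\sqrt{\log(n)} + \delta_r(t)\big)n^{1/2}.
\]
As this holds for all $v \in [n]$, the claimed bound on $\Delta(G_m)$ follows.
\end{proof}
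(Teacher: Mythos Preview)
Your proof is correct and is precisely the intended argument: the paper states the corollary as an immediate consequence of Theorem \ref{thm:regularity} without giving a separate proof, and adding the two degree estimates with the cancellation $2r(t)+2(t-r(t))=2t$ is exactly the deduction it has in mind.
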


\noindent Next, let $G_{blue,i}$ be the blue subgraph of $G_i$.  Theorem \ref{thm:blue} follows from the next result:

\begin{theorem}\label{thm:indepno}
There is a constant $\gamma>0$ such that, with high probability, $\alpha(G_{blue,m}) < \gamma \sqrt{\log(n)\cdot n}$.
\end{theorem}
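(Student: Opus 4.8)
The plan is to bound $\pr{\alpha(G_{blue,m}) \ge s}$ by a union bound over all $s$-element subsets $U$ of $[n]$, where $s = \gamma\sqrt{n\log(n)}$ for a sufficiently large constant $\gamma$ (we will see $\gamma > 4/\mu$ suffices). For a fixed such $U$, set $Y_U(i) = |O_{0,i} \cap \binom{U}{2}|$: the number of pairs inside $U$ whose addition as $e_{i+1}$ would create a blue edge. Since $e_{i+1}$ is uniform over $O_i$, we have $\pr{e_{i+1} \in O_{0,i}\cap\binom{U}{2} \mid \mathcal{F}_i} = Y_U(i)/Q(i)$, where $\mathcal{F}_i$ is generated by $e_1,\dots,e_i$, and $U$ is independent in $G_{blue,m}$ exactly when this event fails at every step $i<m$. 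Hence
\[\pr{U \text{ is independent in } G_{blue,m}} = \ev{\prod_{i=0}^{m-1}\lp 1 - \frac{Y_U(i)}{Q(i)}\rp} \le \ev{\exp\lp -\sum_{i=0}^{m-1}\frac{Y_U(i)}{Q(i)}\rp},\]
so the task reduces to showing $\sum_{i<m} Y_U(i)/Q(i)$ is typically much larger than $\log\binom{n}{s} \le \tfrac{s}{2}\log(n)(1+o(1)) = \tfrac{\gamma}{2}n^{1/2}\log(n)^{3/2}(1+o(1))$.

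Next I would identify the trajectory of $Y_U$. A codegree-$0$ non-edge is always open (adding it creates no triangle, hence no diamond), so $uv \in O_{0,i}$ leaves this set exactly when $e_{i+1} \in \{uv\}\cup\widetilde{Y}_{uv}(i)$; therefore $\ev{Y_U(i)-Y_U(i+1)\mid\mathcal{F}_i} = \sum_{uv\in O_{0,i}\cap\binom{U}{2}}(1 + |Y_{uv}(i)|)/Q(i)$, which along the trajectory equals $Y_U(i)\cdot(y(t)/q(t))\,n^{-3/2}(1+o(1)) = Y_U(i)\cdot 8t\,n^{-3/2}(1+o(1))$, using the identity $y/q \equiv 8t$ that one reads off from \eqref{eq:soltoodes}. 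This matches $d\hat{y}_U/dt = -8t\,\hat{y}_U$ with $\hat{y}_U(0) = \binom{s}{2}$, whose solution is $\hat{y}_U(t) = e^{-4t^2}\binom{s}{2}$. The crucial point is that the $e^{-4t^2}$ in $\hat{y}_U$ cancels the $e^{-4t^2}$ in $q(t) = \tfrac12 e^{-4t^2}(1+4r(t)^2)$: on the events of Theorems \ref{thm:trajectory} and \ref{thm:regularity}, and given the lower bound $Y_U(i)\ge\tfrac12 e^{-4t^2}\binom{s}{2}$ (whose proof is discussed next), one gets $Y_U(i)/Q(i)\ge s^2/(8n^2)$ for all $i\le m$, so $\sum_{i<m}Y_U(i)/Q(i)\ge m\,s^2/(8n^2) = \tfrac{\mu\gamma^2}{8}n^{1/2}\log(n)^{3/2}(1+o(1))$. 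Choosing $\gamma > 4/\mu$ then makes $\binom{n}{s}\exp(-\sum_{i<m}Y_U(i)/Q(i)) \to 0$.

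The heart of the argument — and the step I expect to be the main obstacle — is the concentration statement: for each fixed $U$, with probability at least $1 - \exp(-\omega(n^{1/2}\log(n)^{3/2}))$ (small enough for the union bound over the $\binom{n}{s}$ sets, once combined with the global events of Theorems \ref{thm:trajectory} and \ref{thm:regularity}), one has $Y_U(i)\ge\tfrac12 e^{-4t^2}\binom{s}{2}$ for all $i\le m$. I would prove this with the martingale machinery already used for Theorem \ref{thm:trajectory}: show $Y_U(i)$ minus a suitable barrier is a supermartingale up to the first time it would breach the barrier, and apply a Freedman-type inequality. The key input is a bound on the one-step conditional variance $\tfrac{1}{Q(i)}\sum_{e\in O_i}(\text{decrease of }Y_U\text{ when }e_{i+1}=e)^2$; since an edge incident to $U$ decreases $Y_U$ by at most $O(\max_{x}|N_{G_i}(x)\cap U|)$, this reduces to controlling codegrees into $U$. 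Those are handled by Theorem \ref{thm:regularity} and Corollary \ref{cor:maxdeg} together with an auxiliary estimate: at each step the probability of adding an edge from a given vertex into $U$ is at most $s/Q(i)$, so $|N_{G_i}(x)\cap U|$ is stochastically dominated by a sum of nearly independent indicators of small mean and concentrates sharply. Pushing the resulting variance bound through Freedman's inequality yields a failure probability of the form $\exp(-n^{1-O(\mu^2)}/\mathrm{polylog}(n))$, and since $\mu = \ep/2K$ is sufficiently small this beats $\binom{n}{s}^{-1}$. Combining the diamond-free structure (codegree at most $1$ on edges) with the earlier regularity estimates in this variance computation is where the real care is needed.

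Assembling the pieces, $\pr{\alpha(G_{blue,m})\ge s}$ is at most the (vanishing) probability that the global good event fails, plus $\binom{n}{s}$ times the sum of the per-$U$ concentration failure probability and $\exp(-\tfrac{\mu\gamma^2}{8}n^{1/2}\log(n)^{3/2}(1+o(1)))$; all terms tend to $0$, proving Theorem \ref{thm:indepno}. Finally, since $G_{blue,m}\subseteq G_{blue}\subseteq G_M$ and $G_M$ is diamond-free, this gives Theorem \ref{thm:blue} and, via $\Delta(G_M)/2\le\alpha(G_M)\le\alpha(G_{blue,m})$ and $e(G_M)\le n\Delta(G_M)/2$, the upper bound in Theorem \ref{thm:finalsize}.
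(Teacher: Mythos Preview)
Your high-level strategy matches the paper's: lower-bound $|O_{0,i}\cap\binom{U}{2}|/Q(i)$ uniformly in $i$, then union-bound over the $\binom{n}{s}$ sets. The trajectory computation and the final arithmetic are fine. The gap is in the concentration step for $Y_U$, and it is exactly the step you flag as ``where the real care is needed.''

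The obstruction is the almost-sure increment bound in Freedman's inequality. When $e_{i+1}=xy$, the drop in $Y_U$ is governed by $|N_x(i)\cap U|+|N_y(i)\cap U|$ (plus analogous terms through the common neighbor if $e_{i+1}\in O_{1,i}$), so the per-step bound $R$ you need is $\max_x |N_x(i)\cap U|$. For a \emph{fixed} $U$, the best tail you can get on this maximum is essentially $\Pr[\max_x|N_x\cap U|>D]\le n\exp(-cD)$, because $|N_x\cap U|$ is a sum of $\{0,1\}$ increments with mean $n^{O(\mu^2)}$. To beat $\binom{n}{s}^{-1}\approx\exp(-\tfrac{\gamma}{2}n^{1/2}\log^{3/2}n)$ you would need $D\gtrsim n^{1/2}\log^{3/2}n$, but then the Freedman bound for $Y_U$ is dominated by the $R\lambda$ term: with $\lambda\asymp n^{-4\mu^2}s^2\asymp n^{1-4\mu^2}\log n$ you get $\lambda/R\asymp n^{1/2-4\mu^2}/\sqrt{\log n}$, so the failure probability is only $\exp(-n^{1/2-O(\mu^2)})$, not the $\exp(-n^{1-O(\mu^2)})$ you assert. (Your variance estimate, using the codegree bound $|Z_{uv}|\le\log^2 n$, does give a good $V$; it is the $R$ term that kills you.) In short, there is no uniform control of $\max_x|N_x\cap U|$ that is simultaneously small enough to make Freedman work and holds with probability beating $\binom{n}{s}^{-1}$.

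The paper sidesteps this by \emph{not} tracking $Y_U$ directly. It introduces the set $L_A(i)$ of vertices with at least $k/n^{\rho}$ neighbours in $A$, and calls a pair in $\binom{A}{2}$ \emph{weakly open} if every common neighbour it has ever acquired lies in $L_A$. By construction, a single edge addition can weakly close at most $2k/n^{\rho}+1$ pairs (edges through $L_A$-vertices are ignored), so the increment is deterministically bounded and Lemma~\ref{lem:demeth} applies over \emph{all} $A$ simultaneously (Lemma~\ref{lem:wopen}). The remaining work is then combinatorial rather than probabilistic: for a given $I$, partition $I=A\cup B\cup C$ so that every $v\in L_I$ has all its $I$-neighbours in one of $A,B$; then any weakly open pair in $A\times B$ has no common neighbour at all and hence lies in $O_{0,i}$. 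Comparing $|W_I|$ with $|W_{A\cup C}|+|W_{B\cup C}|$ yields $|O_{0,i}\cap\binom{I}{2}|\ge \tfrac{q_0}{4}|I|^2$, and your endgame goes through. The missing idea in your proposal is precisely this two-step manoeuvre: relax the tracked quantity to force bounded increments, then recover genuine open pairs via the partition.
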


The remainder of this paper is organized as follows: Section \ref{sec:prelim} contains a few technical preliminaries, including a lemma from \cite{BK} which will allow us to justify our differential equations approximations.  In Section \ref{sec:trajectory}, we prove Theorem \ref{thm:trajectory}.  In Section \ref{sec:regularity}, we prove Theorem \ref{thm:regularity}.  Finally, in Section \ref{sec:independence}, we prove Theorem \ref{thm:indepno}.

\section{Preliminaries}\label{sec:prelim}

\subsection{The Differential Equations Method}\label{sec:demeth}

We use the notation ``$\pm$" in two distinct ways throughout this paper.  The notation $a \pm b$ will be taken to mean the interval $\{a + xb : -1 \le x \le 1\}$; distinct instances of $\pm$ used this way in the same expression will be treated independently, i.e. $(a \pm b)(c \pm d)$ will be taken to mean $\{(b + x_1c)(d + x_2e) : -1 \le x_1,x_2 \le 1\}$.  We will also write $a = b \pm c$ instead of $a \in b \pm c$.

For a sequence of random variables $A(1),A(2),\ldots,$, we will use $A^{\pm}$ to denote pairs of sequences of nonnegative random variables $A^+(1),A^+(2),\ldots$ and $A^-(1),A^-(2),\ldots$, such that \[A(i+1)-A(i) = A^+(i) - A^-(i).\]  Similarly, for a differentiable function $f(t)$, we will use $f^+$ and $f^-$ to denote the positive and negative parts of $f'(t)$.

To show that the variables follow the conjectured trajectories, we appeal to an approach to the differential equations method presented in Lemma 7.3 from \cite{BK}.  Aside from notation changes ($U_{j,A}$ in place of $X_{j,A}$, $U_{j,A}^{\pm}$ in place of $Y_{j,A}^{\pm}$), the only difference between the statement here and in \cite{BK} is that we have set an error parameter $e_j=0$; our arguments will not rely on it.

We reproduce from \cite{BK} the setup for the necessary lemma: suppose we have a stochastic graph process defined on $[n]$, where $n$ is large.  Let $l$ be a fixed positive integer, and for $j \in [l]$, let $k_j, S_j$ be parameters (which can depend on $n$).

Now, suppose for each $j \in [l]$ and $A \in \bin{[n]}{k_j}$, there is a sequence of random variables $U_{j,A}(i)$, defined for $i=0,\ldots,m$ and measurable with respect to the underlying graph process.

Further, we suppose \[U_{j,A}(i+1) - U_{j,A}(i) = U^{+}_{j,A}(i) - U^{-}_{j,A}(i),\] where $U^{+}_{j,A}(i),U^{-}_{j,A}(i) \ge 0$.
We relate these sequences to functions on $[0,\infty)$ by letting $t=i/s$ for some function $s=s(n)$ that tends to infinity.  The goal is then to argue that, for some collection $u_j(t)$ of continuous functions, \[U_{j,A}(i) \approx u_j(t)S_j\] for all $j \in [l]$ and $A \in \bin{[n]}{k_j}$, $i=0\ldots,m$.  We view $1 \le j \le l$ as the type of random variable, and the set $A$ as giving its position in the graph.  The parameter $S_j$ is the size-scaling for the variable.

\begin{lemma}[\cite{BK}, Lemma 7.3]\label{lem:demeth}
Let $0 < \epsilon < 1$ and $c,C> 0$ be constants, and suppose for each $j \in [l]$ we have a parameter $s_j(n)$ and functions $u_j(t),\theta_j(t),\gamma_j(t)$ that are smooth and nonnegative for $t \ge 0$.  For $i^* = 1,2,\ldots,m$, let $\ca{G}_{i^*}$ be the event that \[U_{j,A}(i) = \lp u_j(t) \pm \frac{\theta_j(t)}{s_j}\rp S_j\] for all $1 \le i \le i^*$, $1 \le j \le l$, and $A \in \bin{[n]}{k_j}$.  Suppose there is also a decreasing sequence of events $\ca{H}_i$, $1 \le i \le m$, such that $\lim_{n \into \infty} \pr{\ca{H}_m \mid \ca{G}_m} = 1$, and that the following conditions hold:

\begin{itemize}
    \item[1.] (Trend Hypothesis) When conditioning on $\ca{G}_i \land \ca{H}_i$, we have \[\ev{U^{\pm}_{j,A}} = \lp u^{\pm}_j(t) \pm \frac{h_j(t)}{4s_j}\rp\frac{S_j}{s},\]
        for all $j \in [l]$ and $A \in \bin{[n]}{k_j}$, where $u^{\pm}_j(t)$ and $h_j(t)$ are smooth nonnegative functions such that \[u_j'(t) = u_j^+(t) - u_j^-(t) \ \ \text{ and }\ \  h_j(t) = (\gamma_j)'(t);\]
    \item[2.] (Boundedness Hypothesis) For each $j \in [l]$, conditional on $\ca{G}_i \land \ca{H}_i$, we have \[U_{j,A}^{\pm}(i) < \frac{S_j}{s_j^2 k_j n^{\epsilon}};\]
    \item[3.] (Initial Conditions) For all $j \in [l]$, we have $\gamma_j(0) = 0$ and $U_{j,A}(0)= S_ju_j(0)$ for all $A \in \bin{[n]}{k_j}$;
    \item[4.] We have $n^{3\epsilon} < s < m < n^2$, $m \le n^{\ep/2}s$, $s \ge 40Cs_j^2 k_j n^{\epsilon}$, $n^{2\epsilon} \le s_j < n^{-\epsilon} s$,
        \[\inf_{t \ge 0} \theta_j(t) - \gamma_j(t)/2 > c,\]
        \[\sup_{t \ge 0} |u_j^{\pm}(t)| < C, \ \ \ \sup_{t \ge 0} |u_j'(t)| < C, \ \ \ \ \int_0^{\infty} |u_j''(t)|\ dt < C,\]
        \[ \sup_{0 \le t \le m/s} |h_j(t)| < n^{\epsilon}, \ \ \ \int_0^{m/s} |h_j'(t)|\ dt < n^{\epsilon}.\]
\end{itemize}
Then $Pr[\ca{G}_m \land \ca{H}_m] \into 1$ as $n \into \infty$.\\

\end{lemma}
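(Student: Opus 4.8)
The plan is a supermartingale argument of the standard type used to prove differential‑equations‑method lemmas. Fix a type $j \in [l]$ and a position $A \in \bin{[n]}{k_j}$, write $t = i/s$, and truncate every process at the stopping time $\tau$ defined as the least $i$ at which $\ca{G}_i \land \ca{H}_i$ fails, so that the Trend and Boundedness Hypotheses are available for all $i < \tau$. I would then set
\[ Z^{+}(i) = U_{j,A}(i) - u_j(t)S_j - \frac{\gamma_j(t)}{2s_j}S_j \qquad\text{and}\qquad Z^{-}(i) = u_j(t)S_j - \frac{\gamma_j(t)}{2s_j}S_j - U_{j,A}(i), \]
which by the Initial Conditions ($\gamma_j(0)=0$, $U_{j,A}(0)=S_ju_j(0)$) both start at $0$; the event that $U_{j,A}(i)$ leaves the window $\lp u_j(t)\pm\theta_j(t)/s_j\rp S_j$ is exactly the event that $Z^{+}(i)$ or $Z^{-}(i)$ exceeds $\lp\theta_j(t)-\gamma_j(t)/2\rp S_j/s_j$, a quantity condition~4 keeps above $cS_j/s_j$. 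So it suffices to show that $\pr{\sup_{i\le\min(\tau,m)} Z^{\pm}(i) \ge cS_j/s_j}$ is $e^{-\omega(\log n)}$ for each fixed $j,A$, and then union bound over $j$, $A$ and the two signs.

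The two key estimates are as follows. First (small drift): writing $u_j(t+\tfrac1s)-u_j(t) = \tfrac{u_j'(t)}{s}+O\!\lp\tfrac{|u_j''(t)|}{s^2}\rp$ and likewise for $\gamma_j$ (with $\gamma_j'=h_j$), the Trend Hypothesis makes the first‑order term $(u_j^{+}-u_j^{-})S_j/s=u_j'(t)S_j/s$ cancel in $\ev{Z^{\pm}(i+1)-Z^{\pm}(i)\mid\mathcal F_i}$, while its error, of size $\le h_j(t)S_j/(2s_js)$, is absorbed by the margin increment $\gamma_j'(t)S_j/(2s_js)=h_j(t)S_j/(2s_js)$; what remains is $O\!\lp|u_j''(t)|\tfrac{S_j}{s^2}+|h_j'(t)|\tfrac{S_j}{s_js^2}\rp$, and summing over the at most $m\le n^{\ep/2}s$ steps and using $\int|u_j''|,\int|h_j'|<n^{\ep}$ together with $s\ge 40Cs_j^2k_jn^{\ep}$ and $s_j<n^{-\ep}s$, the total is $o(S_j/s_j)$, hence below $cS_j/(2s_j)$. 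So on $\{i<\tau\}$ we may write, via Doob's decomposition, $Z^{\pm}(i)=M^{\pm}(i)+D^{\pm}(i)$ with $M^{\pm}$ a martingale started at $0$ and $D^{\pm}(i)<cS_j/(2s_j)$. Second (small increments and variance): by the Boundedness Hypothesis a random increment of $M^{\pm}$ is at most $S_j/(s_j^2k_jn^{\ep})$ and the deterministic shifts (of order $CS_j/s$ and $n^{\ep}S_j/(s_js)$) are smaller since $s\ge 40Cs_j^2k_jn^{\ep}$, so the increments are bounded by $b_j:=2S_j/(s_j^2k_jn^{\ep})$; and since $U_{j,A}^{\pm}\in[0,b_j]$ with $\ev{U_{j,A}^{\pm}\mid\mathcal F_i}\le 2CS_j/s$ (Trend Hypothesis plus $|u_j^{\pm}|<C$), the predictable quadratic variation obeys $\sum_{i<m}\ev{(M^{\pm}(i{+}1)-M^{\pm}(i))^2\mid\mathcal F_i}\le 4Cmb_jS_j/s\le 4CS_j^2/(s_j^2k_jn^{\ep/2})=:V$, using $m\le n^{\ep/2}s$.

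Now feed $\lambda=cS_j/(2s_j)$, $b_j$ and $V$ into a Freedman‑type inequality for supermartingales with bounded one‑step changes and bounded predictable quadratic variation; since $b_j\lambda\ll V$ (as $s_jn^{\ep/2}\into\infty$) this gives $\pr{\sup_{i\le m}M^{\pm}(i)\ge\lambda}\le\exp(-\lambda^2/(4V))\le\exp(-c'k_jn^{\ep/2})$ for a constant $c'=c'(c,C)>0$. As there are $l$ types and $\bin{n}{k_j}\le n^{k_j}$ positions of each, a union bound over $j$, $A$ and the two signs gives total failure probability at most $\sum_{j\in[l]}2n^{k_j}e^{-c'k_jn^{\ep/2}}$, which tends to $0$ because $n^{\ep/2}=\omega(\log n)$. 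This controls $\pr{\tau\le m}$ in the case where $\ca{G}$ is the hypothesis violated at time $\tau$; combining it with $\pr{\ca{H}_m\mid\ca{G}_m}\into1$ (a standard bootstrap, using that $\ca{H}_i$ is decreasing) to handle the case where $\ca{H}$ is violated first yields $\pr{\ca{G}_m\land\ca{H}_m}=\pr{\tau>m}\into1$, as required.

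I expect the main obstacle to lie in the first of the two estimates: arranging the margin functions so that the Trend‑Hypothesis error is \emph{exactly} matched by the $\gamma_j$‑increment, and checking that the leftover second‑order terms, accumulated over as many as $n^2$ steps, still fit inside the fixed gap $\theta_j-\gamma_j/2>c$. This is precisely the point where essentially every inequality packaged into condition~4 ($n^{3\ep}<s<m<n^2$, $m\le n^{\ep/2}s$, $s\ge 40Cs_j^2k_jn^{\ep}$, $n^{2\ep}\le s_j<n^{-\ep}s$, and the bounds on $\int|u_j''|$ and $\int|h_j'|$) gets used in concert. The remaining delicate but essentially routine point is the bookkeeping around the stopping time $\tau$ and the interplay of the events $\ca{G}_i$ and $\ca{H}_i$; the concentration step itself is a black‑box application of a known supermartingale inequality.
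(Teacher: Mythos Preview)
The paper does not give its own proof of this lemma: it is quoted (with minor notational changes and the parameter $e_j$ set to $0$) from Bohman--Keevash \cite{BK}, Lemma~7.3, and used as a black box. Your supermartingale-plus-Freedman outline is the standard way such differential-equations-method lemmas are established and is essentially the argument in \cite{BK}; in particular, the two-step estimate you describe --- cancel the first-order drift against the $\gamma_j$-increment, bound the leftover second-order accumulation by $o(S_j/s_j)$ using the integral conditions in item~4, then feed the bounded increments and predictable quadratic variation into a Freedman-type inequality and union bound over the $O(n^{k_j})$ positions --- is exactly the intended mechanism, and your identification of condition~4 as the place where all the parameter inequalities are consumed is accurate.
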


We mention three technical issues which will arise from our application of Lemma \ref{lem:demeth}, all of which are easily dealt with.  The first is that we will apply Lemma \ref{lem:demeth} to prove Theorems \ref{thm:trajectory} and \ref{thm:regularity}, as well as Lemma \ref{lem:wopen} in Section \ref{sec:independence}.  In our proof of Theorem \ref{thm:regularity}, we include the conclusion of Theorem \ref{thm:trajectory} up to step $i$ in our high probability event $\ca{H}_i$, while our proof of Lemma \ref{lem:wopen} will include the corresponding conclusions of both Theorems \ref{thm:trajectory} and \ref{thm:regularity} in $\ca{H}_i$.   To avoid needing to verify that $\lim_{n \into \infty} \pr{\ca{H}_m | \ca{G}_m} = 1$ each time, we formally view the three applications of Lemma \ref{lem:demeth} as a single large application.  However, for ease of reading we prefer to separate the three arguments and present them individually.

The second issue is that a few of our variables do not have initial error $0$.  We will instead point out in these cases that the initial error is $o(S_j/s_j)$, and view our arguments as applied to the variables $U_{j,A}(i) - (U_{j,A}(0) - u_j(0)S_j)$: as our error terms are of the form $\Omega(1)S_j/s_j$, this subtlety can mostly be ignored.  We mention that one could easily modify the proof of Lemma 7.3 from \cite{BK} such that the conclusion holds with the weaker assumptions that the initial error is $o(S_j/s_j)$ and $\lim_{n \into \infty} \pr{\ca{H}_m} = 1$.

The final issue is that we only track some of our variables over a restricted collection of pairs.  Recall that, say, we only track $|X_{2,A}(i)|$ over pairs $A \notin E_i$, whereas the variables in the lemma are tracked over all of $\bin{[n]}{2}$.  We handle this by modifying the definition of the variables as follows: for the variable $U_{j,A}(i)$ corresponding to $|X_{2,A}(i)|$, if $A \notin E_i$ we set $U_{j,A}(i)=|X_{2,A}(i)|$, while if $A \in E_i$, we set $U_{j,A}^+(i-1) = u_j^+(t(i-1))S_j/s$ and $U_{j,A}^-(i-1)=u_j^-(t(i-1))S_j/s$, and $U_{j,uv}(i) = U_{j,A}(i-1) + U^+_{j,A}(i-1) - U^-_{j,A}(i-1)$.

Consequently, if $A \in E_i$, the trend hypothesis follows trivially, and the boundedness hypothesis follows from the inequalities $|u_j^{\pm}| \le C$ and $s \ge 40Cs_j^2k_jn^{\ep}$.  If, on the other hand, $A \notin E_i$, we appeal to the fact that the trend hypothesis asserts that $\ev{U_{j,A}^{\pm}(i) | \gd}$ lies in an interval centered at $u_j^{\pm}S_j/s$.  As $\ev{U_{j,A}^{\pm}(i) | \gd}$ is a convex combination of $\ev{U_{j,A}^{\pm}(i) | \gd \land (A \notin E_{i+1})}$ and $\ev{U_{j,A}^{\pm}(i) | \gd \land (A \in E_{i+1})}=u_j^{\pm}S_j/s$, it suffices to show the bounds of the trend and boundedness hypotheses hold, conditioned on $\gd \land (A \notin E_{i+1})$.

\subsection{Inequalities and Additional Lemmas}

Most of our arguments will focus on applying Lemma \ref{lem:demeth}, the most technical part of which is verifying the trend hypothesis for each of the variables.  The aim of this section is to compile a collection of simple observations which facilitate the later computations.

First, from Theorem \ref{thm:odesol}, the definitions of $\theta_y, \theta_x$, $\theta_q$, and recalling that $0 \le r(t) \le 1/2\sqrt{2}$, it follows that for $t \in [0,\mu\sqrt{\log(n)}]$, the following inequalities hold:

\[q_1 \le q_0,\ \ \ \  x_j \le x_0=4q_0^2,\ \ \ \  y_{0,0} \le 2\sqrt{2}q_0,\ \ \ \ y(t) \le 12tq_0\le \frac{3}{2},\]
\[ \delta_y^2 = o(\delta_x), \ \ \ \delta_y^2=o(\delta_q),\ \ \ \delta_x = 2q_0\delta_y,\ \ \ \ 2\delta_q < q_0/2, \ \ \text{ and } \ \  t\delta_q \le \delta_y.\]

From these inequalities, we can easily show the following lemma:

\begin{lemma}\label{lem:approx}
For $t \in [0,\mu\sqrt{\log(n)}]$, assuming all functions below are evaluated at $t$ and $n$ is sufficiently large,
\begin{eqnarray*}
\frac{(q_a \pm \delta_q)(y_{k,l} \pm \delta_y)}{q \pm 2\delta_q} &\subseteq&   \frac{q_a\cdot y_{k,l}}{q} \pm 74\delta_y,\\
\frac{(x_j \pm \delta_x) (y_{k,l} \pm \delta_y)}{q \pm 2\delta_q} &\subseteq& \frac{x_j\cdot y_{k,l}}{q} \pm (120t + 5)\delta_x,\\
\frac{x_j \pm \delta_x}{q \pm 2\delta_q}\ \ \ \ \  &\subseteq& \frac{x_j}{q} \pm 24 \delta_y,\ \text{ and }\\
\frac{(y_{k,l} \pm \delta_y) (y_{k',l'} \pm \delta_y)}{q \pm 2\delta_q} &\subseteq& \frac{y_{k,l}\cdot y_{k',l'}}{q} \pm ( 624t + 1)\delta_y.\\
\end{eqnarray*}
\end{lemma}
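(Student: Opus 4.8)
The plan is to establish all four inclusions by the same routine interval computation, drawing only on the elementary bounds on the trajectory and error functions displayed immediately above. In each case one writes the numerator as a main term plus a sum of error products, bounds the denominator away from $0$, divides while controlling the resulting multiplicative error, and collects; the listed inequalities are arranged precisely so that this closes with the stated constants.

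First I would dispose of the denominator. Since $q=q_0+q_1\ge q_0$ and $2\delta_q<q_0/2$, we have $q\pm 2\delta_q\ge q-2\delta_q>q/2\ge q_0/2>0$, and $2\delta_q/q<1/2$, so
\[\frac{1}{q\pm 2\delta_q}=\frac{1}{q}(1+\eta),\qquad |\eta|\le \frac{4\delta_q}{q}<1.\]
The one conceptual point is that $1/q\le 1/q_0=2e^{4t^2}$ is \emph{not} bounded by a constant on $[0,\mu\sqrt{\log n}]$, so $1/q$ cannot simply be absorbed into an $O(1)$; instead each factor of $1/q$ must be paired with a numerator factor carrying a compensating $e^{-4t^2}$, i.e.\ a factor that is $O(q_0)$. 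The displayed bounds supply exactly such factors: $q_a\le q_0$, $x_j\le x_0=4q_0^2$, $y_{k,l}\le y\le 12tq_0$, and $\delta_x=2q_0\delta_y$. One also records $\delta_q\le\delta_y$ (immediate, since $\theta_q(0)=\theta_y(0)=1$ and $\theta_q'=K\theta_y\le K(2t+1)\theta_y=\theta_y'$) and $\delta_q/q_0\to 0$ (immediate from $\theta_q\le 1+K\mu\sqrt{\log n}\,n^{\ep}$, $e^{4t^2}\le\theta_y<n^{\ep}$, and $\ep<1/12$).

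For the first inclusion I would expand
\[(q_a\pm\delta_q)(y_{k,l}\pm\delta_y)=q_ay_{k,l}\pm\big(\delta_q y_{k,l}+\delta_y q_a+\delta_q\delta_y\big).\]
Dividing by $q\pm 2\delta_q$ using the identity above, the output is $\frac{q_ay_{k,l}}{q}$ plus three error contributions: the three error products divided by $q$, the cross term $\frac{q_ay_{k,l}}{q}\,\eta$, and a lower-order remainder. Each error product over $q$ is $O(\delta_y)$ by the pairing principle — e.g.\ $\delta_q y_{k,l}/q\le 12tq_0\delta_q/q\le 12(t\delta_q)\le 12\delta_y$ via $y\le 12tq_0$, $q_0\le q$ and $t\delta_q\le\delta_y$, and likewise $\delta_y q_a/q\le\delta_y$ and $\delta_q\delta_y/q\le\delta_y$ — while the cross term is at most $q_ay_{k,l}\cdot 4\delta_q/q^2\le 48(t\delta_q)\le 48\delta_y$, again pairing the two factors of $1/q$ with $q_a\le q_0$ and $y_{k,l}\le 12tq_0$. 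Adding the contributions bounds the total error by $74\delta_y$. The remaining three inclusions would go through identically; the only new bookkeeping is that for the second and third one also uses $x_j\le 4q_0^2$ and $\delta_x=2q_0\delta_y$, after which the errors naturally come out in units of $\delta_x$ (resp.\ $\delta_y$); the $t$-dependence of the constants $120t+5$ and $624t+1$ is produced by the bound $y\le 12tq_0$ (with, in the fourth inclusion, one of the two resulting factors of $t$ absorbed by $t\delta_q\le\delta_y$ in the cross term); and the quadratic term $\delta_y^2/q$ arising in the fourth is swallowed using $\delta_y^2=o(\delta_x)\subseteq o(q\delta_y)$.

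I expect no genuine obstacle: the content is entirely in the displayed inequalities together with the pairing observation of the second paragraph (that every occurrence of $1/q$ must be matched with an $O(q_0)$ numerator factor, since $q$ is only bounded below by $\Omega(e^{-4t^2})$, not by a constant). Verifying the explicit constants $74$, $120t+5$, $24$, $624t+1$ is then just a matter of adding up the contributions, and the deliberate slack in the statement leaves ample room for a non-optimized accounting.
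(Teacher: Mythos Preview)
Your proposal is correct and follows essentially the same approach as the paper: expand the numerator into a main term plus error products, control the denominator perturbation (you via the multiplicative factor $(1+\eta)$ with $|\eta|\le 4\delta_q/q$, the paper via the explicit difference $\frac{1}{q-2\delta_q}-\frac{1}{q}$ and the crude lower bound $q\pm 2\delta_q\ge q_0/2$ on the error terms), and then close using exactly the pairing observation you isolate---each factor of $1/q$ must be matched with an $O(q_0)$ numerator factor via $q_a\le q_0$, $x_j\le 4q_0^2$, $y_{k,l}\le 12tq_0$, $\delta_x=2q_0\delta_y$, $t\delta_q\le\delta_y$. The paper writes out the second inclusion explicitly and declares the other three analogous; your choice to display the first instead and your slightly different bookkeeping of the denominator are purely cosmetic differences.
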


\begin{proof}
As the same algebraic manipulations are employed to prove all four containments, we explicitly show the second one:
\begin{eqnarray*}
\frac{(x_j \pm \delta_x) (y_{k,l} \pm \delta_y)}{q \pm 2\delta_q} &\subseteq& \frac{x_jy_{k,l}}{q \pm 2\delta_q} \pm \lp \frac{\delta_x y_{k,l} + \delta_y x_j + \delta_x\delta_y}{q_0/2} \rp\\
&\subseteq& \frac{x_jy_{k,l}}{q} \pm \lp \frac{x_jy_{k,l}(2\delta_q)}{q(q - 2\delta_q)} + \frac{12tq_0\delta_x + 4q_0^2\delta_y + 2q_0\delta_y^2}{q_0/2} \rp\\
&\subseteq& \frac{x_jy_{k,l}}{q} \pm \lp \frac{(4q_0^2) \cdot 12tq_0(2\delta_y)}{q_0(q_0/2)} + 24t\delta_x + 8q_0\delta_y + 4\delta_y^2 \rp\\
&\subseteq& \frac{x_jy_{k,l}}{q} \pm \lp 96t(2q_0\delta_y) + 24t\delta_x + 4\delta_x + \delta_x \rp\\
&=&  \frac{x_jy_{k,l}}{q} \pm (120t + 5)\delta_x.
\end{eqnarray*}
\end{proof}

While the exact values of the constants in the right-hand side of Lemma \ref{lem:approx} are themselves not essential to the argument, what is essential is that they are independent of the actual value of $K$.  This implies that replacing instances of $q_a, x_j, y_{k,l}$ with $(q_a \pm \delta_q),(x_j \pm \delta_x),(y_{k,l} \pm \delta_y)$ in the positive and negative parts of $\frac{dx_b}{dt}$, for example, produces error terms of the form $O(t+1)\delta_x$, where the constants hidden in the $O()$ notation are independent of $K$.  Similarly, the same substitutions in the positive and negative parts of $\frac{dy_{k,l}}{dt}$ produce error terms of the form $O(t+1)\delta_y$, and in $\frac{dq_a}{dt}$ the error terms have the form $O(1)\delta_y$.  Our verification of the trend hypothesis will essentially follow from this observation by choosing $K$ sufficiently large.

Finally, as in the triangle-free process we will ultimately show that $|Z_{uv}(i)|$ is suitably bounded throughout the diamond-free process, specifically that $|Z_{uv}(i)| \le \log^2(n)$ for $0\le i \le m$.  This bounding of the codegrees allows us to argue that relatively few open pairs are partial to many distinct pairs in $G_i$:

\begin{lemma} \label{lem:bdchange} Conditioned on $|Z_{uv}(i)| \le \log^2(n)$ for all $uv \in \bin{[n]}{2}$, if $A \subseteq \bin{[n]}{2} \setminus E_{1,i}$, the number of open pairs which are partial to at least two distinct pairs in $A$ is at most $\bin{|A|}{2}\log^2(n)$.
\end{lemma}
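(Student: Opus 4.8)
The plan is to reduce the statement to a per-pair bound by a double counting argument. Write $\ca{P}$ for the set of open pairs partial to at least two distinct members of $A$, and for distinct $A_1,A_2\in A$ let $N(A_1,A_2)$ denote the number of open pairs partial to both $A_1$ and $A_2$. Since each $p\in\ca{P}$ is partial to at least two members of $A$, it is counted in $N(A_1,A_2)$ for at least one pair $\{A_1,A_2\}$, so $|\ca{P}|\le\sum_{\{A_1,A_2\}}N(A_1,A_2)$. Thus it suffices to show $N(A_1,A_2)\le\log^2(n)$ for every pair of distinct $A_1,A_2\in A$, and then sum over the $\bin{|A|}{2}$ choices of $\{A_1,A_2\}$.

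The key structural fact, immediate from the definition of $\wt{Y}_{uv}(i)$, is that a partial pair with respect to $uv$ contains exactly one of $u,v$, and if that partial pair is $uw$ then the complementary pair $vw$ is an edge of $G_i$. Hence, if an open pair $p=\{x,y\}$ is partial to a pair $B$, then $B$ meets $p$ in exactly one vertex, $p\ne B$ (an open pair is never partial to itself, since any partial pair with respect to $p$ would have to contain a vertex outside $p$), and the remaining vertex of $B$ is a $G_i$-neighbour of the endpoint of $p$ lying outside $B$. Now fix distinct $A_1,A_2\in A$ and argue by cases on $|A_1\cap A_2|$.

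If $A_1$ and $A_2$ are disjoint, an open pair partial to both meets each of them, necessarily in distinct vertices, hence is determined by a choice of one endpoint in $A_1$ and one in $A_2$; so $N(A_1,A_2)\le 4\le\log^2(n)$ for $n$ sufficiently large, with no appeal to the codegree bound. If $|A_1\cap A_2|=1$, write $A_1=\{a,b\}$, $A_2=\{a,c\}$ with $b\ne c$, and let $p$ be an open pair partial to both. Inspecting which vertices $p$ shares with $A_1$ and with $A_2$: sharing $b$ with $A_1$ and $c$ with $A_2$ forces $p=\{b,c\}$, and partiality then forces $ab,ac\in E_i$; any other choice of shared vertices forces $p\in\{A_1,A_2\}$, which is impossible; and sharing $a$ with both forces $p=\{a,y\}$ for some $y\ne a$, whereupon the structural fact gives $by,cy\in E_i$, i.e.\ $y\in Z_{bc}(i)$. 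So the open pairs partial to both lie in $\{\{a,y\}:y\in Z_{bc}(i)\}$ together with possibly the single pair $\{b,c\}$; and $\{b,c\}$ arises only when $ab,ac\in E_i$, in which case $a\in Z_{bc}(i)$, so the pairs through $a$ number at most $|Z_{bc}(i)|-1$. In every case $N(A_1,A_2)\le|Z_{bc}(i)|\le\log^2(n)$.

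I expect the only delicate point to be this last count: a careless argument gives $|Z_{bc}(i)|$ pairs through $a$ plus one more, namely $\{b,c\}$, overshooting $\log^2(n)$ by one; the resolution is that $\{b,c\}$ can occur only when $a$ is itself a common neighbour of $b$ and $c$, so $a$ is never the free endpoint $y$ of a qualifying pair $\{a,y\}$ and may be deleted from $Z_{bc}(i)$. (Consistently, when $ab,ac\in E_i$ the pair $\{b,c\}$ really is a non-edge: otherwise $abc$ is a triangle and $ab$ an edge of codegree $1$, contradicting $A_1\notin E_{1,i}$, which is what makes it legitimate for $\{b,c\}$ to be open and hence to be counted.) Everything else is routine case-checking.
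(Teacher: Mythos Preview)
Your proof is correct and follows essentially the same approach as the paper's: bound $N(A_1,A_2)$ for each pair of elements of $A$ by splitting into the disjoint and overlapping cases, then sum. Your disjoint-case bound of $4$ is slightly weaker than the paper's bound of $2$ (the four candidate pairs split into two incompatible pairs since, e.g., $\{u,u'\}$ partial to both forces $uu'$ open while $\{u,v'\}$ partial to $A_2$ forces $uu'\in E_i$), but this is irrelevant for the conclusion; on the other hand, your treatment of the overlapping case, in particular the off-by-one issue with the extra pair $\{b,c\}$, is handled more carefully than in the paper.
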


\begin{proof}
If $uv,u'v' \in \bin{[n]}{2} \setminus E_{1,i}$ are disjoint, there can be at most two pairs which are partial with respect to both $uv$ and $u'v'$.  If, on the other hand, $uv,vw \in \bin{[n]}{2} \setminus E_{1,i}$ are distinct, the number of pairs which are partial to both is at most $|Z_{uw}(i)|$.
\end{proof}

\section{The Lower Bound - Proof of Theorem \ref{thm:trajectory}}\label{sec:trajectory}

To prove Theorem \ref{thm:trajectory}, we apply Lemma \ref{lem:demeth} with the following values:  $s=n^{3/2}$, $\epsilon=1/12$, $m$ is as given in the introduction, $c=1/2$, and $C$ is a suitably large constant.  Additionally, $s_j = n^{1/6}$ and $\gamma_j = \theta_j-1$ for all $j$.

\begin{itemize}
\item For $j \in \{1,2\}$, we set $k_j=n$, $S_j=n^2$, $U_{j,[n]}=Q_{j-1}$, $u_j = q_{j-1}$, and $\theta_j=\theta_q$.
\item For $j \in \{3,4,5\}$, we set $k_j=2$, $S_j = n$, $U_{j,A} = |X_{j-3,A}|$, $u_j = x_{j-3}$, and $\theta_j=\theta_x$.
\item For $j \in \{6,\ldots,9\}$, we set $k_j = 2$, $S_j = \sqrt{n}$ and $\theta_j = \theta_y$.  We let $U_{6,A},\ldots,U_{9,A}$ and $u_6,\ldots,u_9$ be $|Y_{0,0,A}|,|Y_{0,1,A}|,\ldots,|Y_{1,1,A}|$ and $y_{0,0},\ldots,y_{1,1}$, respectively.
\item We let $\ca{H}_{i^*}$ be the event that $|Z_{uv}(i)| \le \log^2(n)$ for all $uv \in \bin{[n]}{2}$, $1 \le i \le i^*$.
\end{itemize}

Finally, we recall that we are only tracking $|Y_{0,0,uv}|$ and $|X_{0,uv}|$ over $uv \notin E_{1,i}$ and the remaining $|X_{j,uv}|$ and $|Y_{k,l,uv}|$ over $uv \notin E_i$.  As mentioned in the discussion in Section \ref{sec:demeth}, formally, for, say, $j=5$ (corresponding to $|X_{2,A}(i)|$), if $A \in E_{i}$ we set $U^+_{5,A}(i-1) = x_2^+(t(i-1))n/n^{3/2}$, $U^-_{5,A}(i-1) = x_2^-(t(i-1))n/n^{3/2}$, and
\[U_{5,A}(i) = \begin{cases}
|X_{0,A}(i)| & \text{ if } A \notin E_{i}\\
U_{5,A}(i-1) + U^+_{5,A}(i-1) - U^+_{5,A}(i-1) & \text{ if } A \in E_{i}.
\end{cases}\]

\noindent And, as mentioned, we will verify the trend and boundedness hypotheses by conditioning on the event $\ca{G}_i \land \ca{H}_i \land (A \notin E_{i+1})$. Throughout the remainder of this section we will implicitly assume the appropriate stopping-time modification for each of the affected variables, but write, say, $|X_{2,A}|$ and $X_{2,A}^{\pm}$ in place of $U_{5,A}$ and $U_{5,A}^{\pm}$ for ease of reading.

We note that, for each $j$, $h_j(t) = \gamma_j'(t) = \theta_j'(t)$, so
\begin{equation}\label{eq:hvals}
\frac{h_j(t)}{4s_j} = \begin{cases}
\frac{K}{4}\delta_y & \text{ if } j \in \{1,2\}\\
\frac{2(K-4)t+K}{4}\delta_x & \text{ if } j \in \{3,4,5\}, \text{ and }\\
\frac{2Kt + K}{4}\delta_y & \text{ if } j \in \{6,7,8,9\}.
\end{cases}
\end{equation}  As was done following the proof of Lemma \ref{lem:approx}, the constants hidden in the $O()$ notation throughout this section will be independent of $K$: from \eqref{eq:hvals} it will follow that some $K$ suitably large suffices.

As $Q_0(0)=\bin{n}{2} = q_0(0)n^2 - \frac{n}{2}$ and, for all $uv \in \bin{[n]}{2}$, $|X_{0,uv}(0)| = n-2 = x_0(0)n - 2$, it follows from $n/2 = o(n^{11/6})$ and $2 = o(n^{5/6})$ that the initial conditions of Lemma \ref{lem:demeth} are met.  The remaining sections are devoted to showing the remaining conditions holds.  In Sections \ref{sec:opedges}-\ref{sec:partverts}, we verify the trend and boundedness hypotheses of Lemma \ref{lem:demeth}.  In Section \ref{sec:cverts} we verify the condition $\lim_{n \into \infty} Pr[\ca{H}_m |\ca{G}_m] = 1$.  Finally, in Section \ref{sec:analytic}, we verify Condition 4 of Lemma \ref{lem:demeth} regarding the analytic requirements of our functions.\\

\subsection{Open Edges}\label{sec:opedges}

Here we verify the trend and boundedness hypotheses for $Q_0,Q_1$.  As mentioned previously, we write $Q_j(i+1)-Q_j(i) = Q^+_j(i) - Q^-_j(i)$, where $Q^+_j(i),Q^-_j(i) \ge 0$.  Provided doing so is unambiguous, we will simply write $Q_0$ in place of $Q_0(i)$, $q_0$ in place of $q_0(t(i))$, etc..

We begin with $Q_0$:  clearly $Q^+_0(i)=0$, while \[Q^-_0(i) = \begin{cases}
 |Y_{0,0,e_{i+1}}| + |Y_{1,0,e_{i+1}}| +1 & \text{ if } e_{i+1} \in O_{0,i}\\
 |Y_{0,0,e_{i+1}}| + |Y_{1,0,e_{i+1}}| & \text{ if } e_{i+1} \in O_{1,i}\\
\end{cases}\]

Consequently, \begin{eqnarray*}
\ev{Q^-_0(i)|\gd} &=& \lp y_{0,0} \pm \delta_y \rp \sqrt{n} + \lp y_{1,0} \pm \delta_y \rp \sqrt{n} \pm 1\\
&\subseteq& \lp y_{0,0} + y_{1,0} \pm \lp 2\delta_y + \frac{1}{\sqrt{n}}\rp \rp\sqrt{n},\\
&\subseteq& \lp y_{0,0} + y_{1,0} \pm 3\delta_y \rp\sqrt{n},
\end{eqnarray*} as $1/\sqrt{n} \ll \delta_y$, so the trend hypothesis for $Q_0$ holds by \eqref{eq:hvals}.\\

Turning to $Q_1$, we see
\[Q^+_1(i) =
\begin{cases}
|Y_{0,0,e_{i+1}}|& \text{ if }e_{i+1} \in O_{0,i}\\
0 & \text{ otherwise.}
\end{cases}\]
Applying Lemma \ref{lem:approx},
\begin{eqnarray*}
\ev{Q^+_1(i)|\gd} &=& \frac{Q_0\cdot \lp y_{0,0} \pm \delta_y \rp \sqrt{n}}{Q}\\
&\subseteq& \lp \frac{(q_0 \pm \delta_q)(y_{0,0} \pm \delta_y)}{q \pm 2\delta_q} \rp \sqrt{n}\\
&\subseteq& \lp \frac{q_0y_{0,0}}{q} \pm O(1)\delta_y \rp\sqrt{n}.\\
\end{eqnarray*}

Next, we turn to  $Q^-_1(i)$: if $e_{i+1} \in O_{0,i}$, then $Q^-_1(i) = |Y_{0,1,e_{i+1}}| + |Y_{1,1,e_{i+1}}|$.  If $e_{i+1} \in O_{1,i}$, then, supposing $e_{i+1}=uv$ and letting $z$ be the unique vertex in $Z_{uv}(i)$,
\[O_{1,i}\setminus O_{1,i+1} = \widetilde{Y}_{0,1,uv} \cup \widetilde{Y}_{1,1,uv} \cup \widetilde{Y}_{0,0,uw} \cup \widetilde{Y}_{0,0,vw}.\]  As $\widetilde{Y}_{0,0,uw} \cap \widetilde{Y}_{0,0,vw} = \{uv\}$, $\widetilde{Y}_{0,0,uw} \cap \widetilde{Y}_{uv}=\emptyset$, and $\widetilde{Y}_{0,0,vw} \cap \widetilde{Y}_{uv}=\emptyset$,
it follows that $Q^-_1(i) = |Y_{0,1,uv}| + |Y_{1,1,uv}| + |Y_{0,0,uw}| + |Y_{0,0,uv}|-1$.

Therefore,
\begin{eqnarray*}
\ev{Q^-_1(i) | \gd}  &=&\frac{Q_0 \cdot\lp y_{0,1}+y_{1,1} \pm 2\delta_y\rp \sqrt{n} + Q_1\cdot \lp y_{0,1} + y_{1,1} + 2y_{0,0} \pm 4\delta_y \pm \frac{1}{\sqrt{n}} \rp\sqrt{n} }{Q}\\
&\subseteq&
\lp \frac{q(y_{0,1} + y_{1,1}) + 2q_1y_{0,0}}{q} \pm \lp O(1)\delta_y +\frac{1}{\sqrt{n}} \rp \rp \sqrt{n},\\
&\subseteq&
\lp y_{0,1} + y_{1,1}+\frac{2q_1y_{0,0}}{q} \pm  O(1)\delta_y \rp \sqrt{n}\\
\end{eqnarray*}
so the trend hypothesis holds for $Q_1$ by \eqref{eq:hvals}.\\

For the boundedness hypothesis, it follows from the same considerations above that the maximum change in any of the $Q^{\pm}_j$ is at most $(y + 2y_{0,0} + 6\delta_y + \frac{1}{\sqrt{n}})\sqrt{n}\le \frac{9}{2}\sqrt{n} < n^{1-1/3-\epsilon}$ for $n$ sufficiently large.\\

\subsection{Open Vertices}\label{sec:opverts}

As in the previous section we begin with the trend hypothesis and show the boundedness hypothesis afterwards.  Suppose first that $uv \notin E_{1,i}$, and let $|X_{0,uv}(i+1)|-|X_{0,uv}(i)|=X^+_{0,uv}(i) - X^-_{0,uv}(i)$. Trivially $X^+_{0,uv}(i)=0$, we focus on $\ev{X^-_{0,uv}(i)|\gd \land (uv \notin E_{1,i+1})}$.  Formalizing the argument given in the introduction, fix a $w \in X_{0,uv}(i)$.  Then $w \notin X_{0,uv}(i+1)$  if and only if
\[e_{i+1} \in \begin{cases}
\widetilde{Y}_{uw}(i) \cup \widetilde{Y}_{vw}(i) \cup \{uw,vw\} & \text{if } uv \notin E_{0,i}, \text{ and }\\
\lp \widetilde{Y}_{uw}(i) \cup \widetilde{Y}_{vw}(i) \cup \{uw,vw\} \rp \setminus \widetilde{Y}_{uv}(i) & \text{if } uv \in E_{0,i}.
\end{cases}\]  As $\ca{G}_i$ holds, conditioning on $uv \notin E_{1,i+1}$ forbids only $e_{i+1}=uv$ if $uv \in O_{1,i}$ and $e_{i+1} \in Y_{0,0,uv}(i)$ if $e_{i+1} \in E_{0,i}$.  It easily follows that this forbids fewer than $10\sqrt{n}$ choices, and thus $e_{i+1}$ is selected uniformly at random from \[Q(i) \pm 10\sqrt{n} = Q(i)\lp 1 \pm \frac{40\sqrt{n}}{n^{2-\ep}}\rp  \subseteq Q(i)\lp 1 \pm o(\delta_x) \rp\] open pairs
(recalling that $Q(i) \ge n^{2-\ep}/4$ by the inequalities $q(t) \ge 1/2n^{\ep}$ and $2\delta_q(t) < q_0(t)/2$).

Applying Lemmas \ref{lem:approx} and \ref{lem:bdchange},
\begin{eqnarray*}
\ev{X^-_{0,uv}(i)|\gd \land (uv\notin E_{1,i})} &=& \sum_{w \in X_{0,uv}(i)} \lp \frac{|Y_{uw}| + |Y_{vw}| \pm (3\log^2(n)+2)}{Q(1 \pm o(\delta_x))}\rp\\
&\subseteq& \lp \frac{(x_0 \pm \delta_x)n\cdot \lp 2\sum_{0 \le j,k \le 1} (y_{j,k} \pm \delta_y) \pm \frac{4\log^2(n)}{\sqrt{n}}\rp\sqrt{n}}{(1 \pm o(\delta_x))(q \pm 2\delta_q)n^2}\rp\\
&\subseteq& \lp \frac{2x_0y}{q} \pm \lp O(t+1)\delta_x + \frac{(x_0 + \delta_x)4\log^2(n)}{\sqrt{n}(q_0/2)}\rp\rp \frac{(1 \pm o(\delta_x))}{\sqrt{n}}\\
&\subseteq& \lp \frac{2x_0y}{q} \pm \lp O(t+1)\delta_x + \frac{48\log^2(n)}{\sqrt{n}}\rp \rp \frac{(1 \pm o(\delta_x))}{\sqrt{n}}\\
&\subseteq& \lp x_0^- \pm  O(t+1)\delta_x \rp \frac{1}{\sqrt{n}},\\
\end{eqnarray*} as $\frac{48\log^2(n)}{\sqrt{n}} < n^{-1/6} \le \delta_x$, verifying the trend hypothesis for $|X_{0,uv}|$.\\

Next, we turn to $X_1$: suppose $uv \notin E_i$, and let $|X_{1,uv}(i+1)|-|X_{1,uv}(i)| = X^+_{1,uv}(i) - X^-_{1,uv}(i)$.  As we now condition additionally on $uv \notin E_{i+1}$, which prevents only $e_{i+1}=uv$ if $uv \in O_i$, $e_{i+1}$ is again chosen uniformly at random from $Q(i)\cdot (1 \pm o(\delta_x))$ choices.  It follows from the definitions that $X_{1,uv}(i+1)\setminus X_{1,uv}(i) \subseteq X_{0,uv}(i)$: a vertex $w \in X_{0,uv}(i)$ lies in $X_{1,uv}(i+1)$ if and only if $e_{i+1}$ lies in the symmetric difference of $\widetilde{Y}_{0,0,uw}(i)$ and $\widetilde{Y}_{0,0,vw}(i)$.  Consequently,
\begin{eqnarray*}
\ev{X^+_{1,uv}(i)|\gd\land (uv \notin E_{i+1})} &=& \sum_{w \in X_{0,uv}} \frac{|Y_{0,0,uw}| + |Y_{0,0,vw}| \pm \log^2(n)}{Q(1 \pm o(\delta_x))}\\
&\subseteq& \frac{(x_0 \pm \delta_x)n\lp 2y_{0,0} \pm 2\delta_y \pm \frac{\log^2(n)}{\sqrt{n}}\rp \sqrt{n}}{(1 \pm o(\delta_x))(q\pm 2\delta_q)n^2}\\
&\subseteq& \lp x_1^+ \pm  O(t+1)\delta_x \rp \frac{1}{\sqrt{n}}.\\
\end{eqnarray*}

Next, let $w \in X_{1,uv}(i)$ and, supposing $vw \in O_{1,i}$, let $z$ be the unique element of $Z_{vw}(i)$.  Then $w \notin X_{1,uv}(i+1)$ if and only if
\[e_{i+1} \in \widetilde{Y}_{uw}(i) \cup \widetilde{Y}_{vw}(i) \cup \widetilde{Y}_{vz}(i) \cup \widetilde{Y}_{wz}(i)\cup \{uw,vw\}.\] Recalling that $\widetilde{Y}_{vz} = \widetilde{Y}_{0,0,vz}$ and $\widetilde{Y}_{wz} = \widetilde{Y}_{0,0,wz}$,
\begin{eqnarray*}
\ev{X^-_{1,uv}(i) | \gd \land (uv \notin E_{i+1})} &=& \sum_{w \in X_{1,uv}} \frac{|Y_{uw}| + |Y_{vw}| + |Y_{0,0,vz}| + |Y_{0,0,wz}| \pm 7\log^2(n) }{Q(1 \pm o(\delta_x))}\\
&\subseteq&  \frac{(x_1 \pm \delta_x)n\lp 2y + 2y_{0,0} \pm 10\delta_y \pm \frac{7\log^2(n)}{\sqrt{n}}\rp \sqrt{n} }{(1 \pm o(\delta_x))(q \pm 2\delta_q)n^2}\\
&\subseteq&  \lp x_1^- \pm O(t+1)\delta_x \rp \frac{1}{\sqrt{n}},\\
\end{eqnarray*}
and the trend hypothesis for $|X_{1,uv}|$ follows.

Finally, we turn to $X_2$: suppose $uv \notin E_i$ and $|X_{2,uv}(i+1)|-|X_{2,uv}(i)| = X^+_{2,uv}(i) - X^-_{2,uv}(i)$.  We first note that $X_{2,uv}(i+1)\setminus X_{2,uv}(i) \subseteq X_{1,uv}(i)$.  This follows from the fact that if $w \in X_{0,uv}(i)$ and the choice of $e_{i+1}$ results in both $uw,vw \in O_{1,i+1}$, then $e_{i+1}=wz$ for some $z \in Z_{uv}(i)$, and therefore $Z_{uw}(i+1) \cap Z_{vw}(i+1) \ne \emptyset$.

Therefore, fix a $w \in X_{1,uv}(i)$, and suppose $uw \in O_{0,i}$ and $vw \in O_{1,i}$.  Let $z$ be the unique vertex in $Z_{vw}(i)$.  Then $w \in X_{2,uv}(i+1)$ if and only if \[e_{i+1} \in \widetilde{Y}_{0,0,uw}(i) \setminus \lp \widetilde{Y}_{vw}(i) \cup \widetilde{Y}_{vz}(i) \cup \widetilde{Y}_{wz}(i) \cup \{uz,wz\} \rp. \]  Applying the same manipulations as above, therefore
\begin{eqnarray*}
\ev{X^+_{2,uv}(i) | \gd \land (uv\notin E_{i+1})} &=& \sum_{w \in X_{1,uv}} \frac{\lp y_{0,0} \pm \delta_y \pm \frac{4\log^2(n)}{\sqrt{n}} \rp \sqrt{n}}{Q(1 \pm o(\delta_x))}\\
&\subseteq& \lp x_2^+ \pm  O(t+1)\delta_x \rp \frac{1}{\sqrt{n}}.\\
\end{eqnarray*}

Now, fixing a $w \in X_{2,uv}(i)$ and letting $z \in Z_{uw}(i)$, $z' \in Z_{vw}(i)$, we see that $w \notin X_{2,uv}(i+1)$ if and only if \[e_{i+1} \in \{uw,vw\} \cup \widetilde{Y}_{uw}(i) \cup \widetilde{Y}_{uz}(i) \cup \widetilde{Y}_{wz}(i) \cup \widetilde{Y}_{vw}(i) \cup \widetilde{Y}_{vz'}(i) \cup \widetilde{Y}_{wz'}(i)\]
Appealing to Lemma \ref{lem:bdchange} and applying the same reasoning, it follows that
\begin{eqnarray*}
\ev{X^-_{2,uv}(i) | \gd\land (uv \notin E_{i+1})} &=& \lp \frac{ (x_2 \pm \delta_x)\lp 2y + 4y_{0,0} \pm 12\delta_y \pm \frac{16\log^2(n)}{\sqrt{n}}\rp }{(1 \pm o(\delta_x))(q \pm 2\delta_q)}  \rp \frac{1}{\sqrt{n}}\\
&\subseteq& \lp x_2^- \pm  O(t+1)\delta_x \rp \frac{1}{\sqrt{n}}
\end{eqnarray*} and the result follows from \eqref{eq:hvals}.

For the boundedness hypothesis, as at most $\frac{9}{2}\sqrt{n}$ open pairs are affected by the addition of $e_{i+1}$ (i.e. go from open to closed or $O_{0,i}$ to $O_{1,i+1}$), the maximum value of $|X^{\pm}_{j,uv}|$ is at most $\frac{9}{2}\sqrt{n} < n^{1-1/3-\epsilon}/2$.

\subsection{Partial Vertices}\label{sec:partverts}

We begin with  $|Y_{0,0,uv}|$: suppose $uv \notin E_{1,i}$, and $|Y_{0,0,uv}(i+1)|-|Y_{0,0,uv}(i)| = Y^+_{0,0,uv}(i) - Y^-_{0,0,uv}(i)$.  By the stopping-time modification we condition on $uv \notin E_{1,i+1}$, which, as noted previously, implies $e_{i+1}$ is selected uniformly at random from $Q(i)(1 \pm o(\delta_x)) \subseteq Q(i)(1 \pm o(\delta_y))$ open pairs.

Let $w \in X_{0,uv}(i)$.  Then $w \in Y_{0,0,uv}(i+1)$ if and only if $e_{i+1} \in \{uw,vw\}$, and consequently \[\ev{Y^+_{0,0,uv}(i) | \gd} = \frac{2|X_{0,uv}(i)|}{Q} \subseteq \lp \frac{2(x_0 \pm \delta_x)}{q \pm 2\delta_q}\rp \frac{1}{n} = \lp \frac{2x_0}{q} \pm O(1)\delta_y \rp \frac{1}{n}.
\]

Next, let $w \in Y_{0,0,uv}(i)$, and suppose $uw \in E_{0,i}$ and $vw \in O_i$.  Then $w \notin Y_{0,0,uv}(i+1)$ if and only if
\[e_{i+1} \in
\begin{cases}
(\widetilde{Y}_{uw} \cup \widetilde{Y}_{vw} \cup \{vw\})\setminus \{uv\}  & \text{ if } uv \notin E_{0,i},\\
\lp \widetilde{Y}_{uw} \cup \widetilde{Y}_{vw} \cup \{vw\} \rp\setminus \widetilde{Y}_{0,0,uv} & \text{ if } uv \in E_{0,i}.\\
\end{cases}\]
Therefore,
\begin{align*}
\mathbb{E}(Y^-_{0,0,uv} &| \gd \land (uv \notin E_{1,i+1})) \\
&= \lp \sum_{w \in Y_{0,0,uv}} \frac{(y_{0,0} \pm \delta_y) + \sum_{0\le j,k \le 1} (y_{j,k} \pm \delta_y) \pm \frac{3\log^2(n)+2}{\sqrt{n}} }{Q(1 \pm o(\delta_y))} \rp \sqrt{n}\\
&\subseteq \lp \frac{(y_{0,0} \pm \delta_y)\lp (y_{0,0} \pm \delta_y) + \sum_{0\le j,k \le 1} (y_{j,k} \pm \delta_y) \pm \frac{3\log^2(n)+2}{\sqrt{n}} \rp }{(1 \pm o(\delta_y))(q \pm 2\delta_q)} \rp \frac{1}{n}\\
&\subseteq \lp \frac{y_{0,0}(y + y_{0,0})}{q} \pm \lp O(t+1)\delta_y + \frac{(y + \delta_y)(4\log^2(n))}{(q_0/2)\sqrt{n}} \rp \rp \frac{(1 \pm o(\delta_y))}{n}\\
&\subseteq \lp y_{0,0}^- \pm \lp O(t+1)\delta_y + \frac{(24t+4e^{4t^2}\delta_y)(4\log^2(n))}{\sqrt{n}} \rp \rp \frac{(1 \pm o(\delta_y))}{n}\\
&\subseteq \lp y_{0,0}^- \pm \lp O(t+1)\delta_y + \frac{100\log^{5/2}(n)}{\sqrt{n}} \rp \rp \frac{(1 \pm o(\delta_y))}{n}\\
&\subseteq \lp y_{0,0}^- \pm  O(t+1)\delta_y \rp \frac{1}{n},
\end{align*} by noting that $4e^{4t^2}\delta_y < \sqrt{\log(n)}$ and, provided $\mu \le 1$, $t \le \sqrt{\log(n)}$.\\

For the remainder of this subsection, it is sufficient to consider only $uv \notin E_i$, and we note conditioning on $\gd \land (uv \notin E_{i+1})$ leaves $Q(i)(1 \pm o(\delta_y))$ equally likely choices for $e_{i+1}$.  We begin with $|Y_{0,1,uv}|$: let $|Y_{0,1,uv}(i+1)| - |Y_{0,1,uv}(i)| = Y^+_{0,1,uv}(i) - Y^-_{0,1,uv}(i)$.  Clearly contribution to $Y_{0,1,uv}(i+1)$ can only come from $X_{1,uv}(i)$ or $Y_{0,0,uv}(i)$, and a vertex in $X_{1,uv}(i)$ enters $Y_{0,1,uv}(i+1)$ with probability $1/(Q(1 \pm o(\delta_y))$.  We therefore focus on the probability a fixed $w \in Y_{0,0,uv}(i)$ enters $Y_{0,1,uv}(i+1)$: suppose $uw \in E_{0,i}$ and $vw \in O_{0,i}$.  For $w$ to lie in $Y_{0,1,uv}(i+1)$, we must have $vw \in O_{1,i+1}$ and $uw \in E_{0,i+1}$, which occurs if and only if $e_{i+1} \in \widetilde{Y}_{0,0,vw}(i) \setminus  \widetilde{Y}_{uw}$.

Therefore,
\begin{eqnarray*}
\ev{Y^+_{0,1,uv}(i)|\gd \land (uv\notin E_{i+1})} &=& \frac{|X_{0,uv}| + \sum_{w \in Y_{0,0,uv}} \lp (y_{0,0} \pm \delta_y)\sqrt{n} \pm \log^2(n) \rp}{Q(1 \pm o(\delta_y))}\\
&\subseteq& \lp \frac{(x_0 \pm \delta_x) + (y_{0,0} \pm \delta_y) \lp (y_{0,0} \pm \delta_y) \pm \frac{\log^2(n)}{\sqrt{n}} \rp}{(1 \pm o(\delta_y))(q \pm 2\delta_q)} \rp \frac{1}{n}\\
&\subseteq& \lp y_{0,1}^+ \pm O(t+1)\delta_y \rp \frac{1}{n}.\\
\end{eqnarray*}

Suppose now that $w \in Y_{0,1,uv}(i)$, and that $uw \in E_{0,i}, vw \in O_{1,i}$, and $z$ is the unique vertex in $Z_{vw}(i)$.  In order to have $w \notin Y_{0,1,uv}(i+1)$, we must have \[e_{i+1} \in (\wtY_{uw} \cup \wtY_{vw} \cup \wtY_{wz} \cup \wtY_{vz} \cup \{vw\}) \setminus \{uv\} = (\wtY_{vw} \cup \wtY_{0,0,uw} \cup \wtY_{0,0,wz} \cup \wtY_{0,0,vz} \cup \{vw\})\setminus \{uv\}.\]

Therefore,
\begin{align*}
\mathbb{E}(Y^-_{0,1,uv}(i) &| \gd \land (uv \notin E_{i+1})) \\
&= \lp \sum_{w \in Y_{0,1,uv}} \frac{(\sum_{0\le j,k \le 1} (y_{j,k} \pm \delta_y)) + 3(y_{0,0} \pm \delta_y) \pm \frac{7\log^2(n)}{\sqrt{n}}}{(1 \pm o(\delta_y))(q \pm 2\delta_q)} \rp \frac{1}{n^{3/2}}\\
&\subseteq \lp \frac{(y_{0,1} \pm \delta_y)\lp (\sum_{0\le j,k \le 1} (y_{j,k} \pm \delta_y)) + 3(y_{0,0} \pm \delta_y) \pm \frac{7\log^2(n)}{\sqrt{n}} \rp }{(1 \pm o(\delta_y))(q \pm 2\delta_q)} \rp \frac{1}{n}\\
&\subseteq \lp y_{0,1}^- \pm O(t+1)\delta_y \rp \frac{1}{n}.
\end{align*}

Next, let $|Y_{1,0,uv}(i+1)| - |Y_{1,0,uv}(i)| = Y^+_{1,0,uv}(i) - Y^-_{1,0,uv}(i)$.  Contribution to $Y_{1,0,uv}(i+1)$ can only come from $X_{1,uv}(i)$ or $Y_{0,0,uv}(i)$: a vertex from $X_{1,uv}(i)$ enters $Y_{1,0,uv}(i+1)$ with probability $1/(Q(1 \pm o(\delta_y)))$, so consider a vertex $w \in Y_{0,0,uv}(i)$.  Suppose that $uw \in E_{0,i}$ while $vw \in O_{0,i}$.  Then $w \in Y_{1,0,uv}(i+1)$ if and only if $e_{i+1} \in \wtY_{0,0,uw}\setminus \wtY_{vw}$, so
\begin{eqnarray*}
\ev{Y^+_{1,0,uv}(i)|\gd \land (uv\notin E_{i+1})} &=& \frac{|X_{1,uv}| + \sum_{w \in Y_{0,0,uv}} \lp (y_{0,0} \pm \delta_y)\sqrt{n} \pm \log^2(n)\rp}{Q(1 \pm o(\delta_y))}\\
&\subseteq& \lp y_{1,0}^+ \pm O(t+1)\delta_y \rp \frac{1}{n}.
\end{eqnarray*}

Next, let $w \in Y_{1,0,uv}(i)$, and suppose $uw \in E_{1,i}$ and $vw \in O_{0,i}$.  Then $w \notin Y_{1,0,uv}(i+1)$ if and only if $e_{i+1} \in (\wtY_{vw} \cup \{vw\})\setminus \{uv\}$.  Therefore,
\begin{eqnarray*}
\ev{Y^-_{1,0,uv}(i)|\gd \land (uv \notin E_{i+1})} &=& \sum_{w \in Y_{1,0,uv}} \frac{\sum_{0\le j,k \le 1} (y_{j,k} \pm \delta_y)\sqrt{n} \pm 1}{Q(1 \pm o(\delta_y))}\\
&\subseteq& \lp y_{1,0}^-  \pm O(t+1)\delta_y \rp \frac{1}{n}.
\end{eqnarray*}

Finally, let $|Y_{1,1,uv}(i+1)|-|Y_{1,1,uv}(i)| = Y^+_{1,1,uv}(i) - Y^-_{1,1,uv}(i)$.  We see first that contribution to $Y_{1,1,uv}(i+1)$ can only come from $X_{2,uv}(i)$, $Y_{0,1,uv}(i)$ or $Y_{1,0,uv}(i)$.  It is trivial to see that it cannot come from $X_{0,uv}(i) \cup X_{1,uv}(i)$, but we mention that it cannot come from $Y_{0,0,uv}(i)$ either.  To see this, suppose $w \in Y_{0,0,uv}(i)$ with $uw \in E_{0,i}$ and $vw \in O_{0,i}$. If the addition of $e_{i+1}$ results in both $Z_{uw}(i+1)\ne \emptyset$ and $Z_{vw}(i+1) \ne \emptyset$, then $e_{i+1} = wz$ for some $z \in Z_{uv}(i)$.  As then $z \in Z_{vw}(i+1)$ and $wz \in E_{1,i+1}$ ($u,w,z$ form a triangle), $vw \notin O_{1,i+1}$ and $w \notin Y_{1,1,uv}(i+1)$.

To determine the expected contribution, it follows easily that a fixed vertex in $X_{2,uv}(i)$ enters $Y_{1,1,uv}(i+1)$ with probability $2/(Q(1 \pm o(\delta_y)))$.  Suppose now that $w \in Y_{0,1,uv}(i)$, and that $uw \in E_{0,i}$, $vw \in O_{1,i}$, and $z \in Z_{vw}(i)$.  Then $w \in Y_{1,1,uv}(i+1)$ if and only if $e_{i+1} \in \wtY_{0,0,uw} \setminus (\wtY_{vw} \cup \wtY_{0,0,vz} \cup \wtY_{0,0,wz})$.  Finally, suppose $w' \in Y_{1,0,uv}(i)$, and that $uw' \in E_{1,i}$ and $vw' \in O_{0,i}$.  Then $w' \in Y_{1,1,uv}(i+1)$ if and only if $e_{i+1} \in \wtY_{0,0,vw'}$.

Combining these observations,
\begin{align*}
\mathbb{E}(Y^+_{1,1,uv}(i)& |\gd \land (uv\notin E_{i+1}))\\
&= \frac{ 2|X_{2,uv}| + \sum_{w \in Y_{0,1,uv}} \lp (y_{0,0} \pm \delta_y)\sqrt{n} \pm 3\log^2(n) \rp + \sum_{w' \in Y_{1,0,uv}} (y_{0,0} \pm \delta_y) \sqrt{n}}{Q(1 \pm o(\delta_y))}\\
&\subseteq \lp \frac{2x_0 + y_{0,1}y_{0,0} + y_{1,0}y_{0,0}}{q} \pm \lp O(t+1)\delta_y + \frac{(y+\delta_y)3\log^2(n)}{(q_0/2)\sqrt{n}} \rp \rp \frac{(1 \pm o(\delta_y))}{n}\\
&\subseteq \lp y_{1,1}^+ \pm O(t+1)\delta_y \rp \frac{1}{n}.\\
\end{align*}

Now, let $w \in Y_{1,1,uv}(i)$, and suppose $uw \in E_{1,i}$ and $vw \in O_{1,i}$.  Let $z$ be the unique vertex in $Z_{vw}(i)$.  Then $w\notin Y_{1,1,uv}(i+1)$ if and only if \[e_{i+1} \in \lp \wtY_{vw} \cup \wtY_{0,0,wz} \cup \wtY_{0,0,vz} \cup \{vw\}\rp \setminus \{uv\}.\]

Therefore
\begin{align*}
\mathbb{E}(Y^-_{1,1,uv}(i)|\gd &\land (uv\notin E_{i+1}))\\
&= \sum_{w \in Y_{1,1,uv}} \frac{\sum_{0\le j,k\le 1} (y_{j,k} \pm \delta_y)\sqrt{n} \pm 2(y_{0,0} \pm \delta_y)\sqrt{n} \pm (3\log^2(n) + 2)}{Q(1 \pm o(\delta_y))} \\
&\subseteq \lp y_{1,1}^- \pm O(t+1)\delta_y \rp \frac{1}{n}.
\end{align*}

\noindent Provided $K$ is suitably large, it follows from \eqref{eq:hvals} that the trend hypothesis holds for the $|Y_{j,k,uv}|$.\\

To argue boundedness, we first note that if $e_{i+1} \in O_{0,i}$, then the only pairs which either become closed or move from $O_{0,i}$ to $O_{1,i+1}$ are those in $\wtY_{e_{i+1}}$, and no edges move from $E_{0,i}$ to $E_{1,i+1}$.  On the other hand, if $e_{i+1} \in O_{1,i}$, say $e_{i+1} = uv$, then letting $z \in Z_{uv}$, the open pairs which become closed are those in $\wtY_{uv} \cup \wtY_{uz} \cup \wtY_{vz}$ and the edges which enter $E_{1,i+1}$ from $E_{0,i}$ are $uz,vz$.  Conditioned on $\ca{H}_i$, Lemma \ref{lem:bdchange} implies $|Y^-_{j,k,uv}| \le 3\log^2(n) + 2 < \frac{\sqrt{n}}{2n^{1/3 + \epsilon}}$.

Next, we observe that, conditioned on $\ca{H}_i$, the maximum contribution to any $Y_{j,k,uv}(i+1)$ from some $X_{l,uv}(i)$ in a single step is at most $1$, while the maximum contribution from a $Y_{j',k',uv}(i)$ is at most $3\log^2(n) + 2$: our earlier arguments show that $|Y^+_{j,k,uv}(i)| <  6\log^2(n) +5 \le \frac{\sqrt{n}}{2n^{1/3 + \epsilon}}$, and the boundedness hypothesis holds.

\subsection{Complete Vertices}\label{sec:cverts}

Here we show that $\lim_{n \into \infty} \pr{\ca{H}_m | \ca{G}_m} = 1$.  First, we observe that for any $uv \in \bin{[n]}{2}$, \[\pr{|Z_{uv}(i+1)| = |Z_{uv}(i)| + 1 | \ca{G}_i} \le \frac{(y(t) + 4\delta_y(t))\sqrt{n}}{(q-2\delta_q)n^2} \le \frac{24t + 1}{n^{3/2}} < \frac{25\mu\sqrt{\log(n)}}{n^{3/2}}\] for $n$ sufficiently large.

Consequently,
\[\pr{\overline{\ca{H}}_m | \ca{G}_m} \le \bin{n}{2} \bin{\mu \sqrt{\log(n)}\cdot n^{3/2}}{\log^2(n)} \lp \frac{25\mu\sqrt{\log(n)}}{n^{3/2}} \rp^{\log^2(n)} < e^{-\Omega(\log^2(n)\log\log(n))} = o(1),\] and $\lim_{n \into \infty} \pr{\ca{H}_m| \ca{G}_m} = 1$.\\

\subsection{Analytic Considerations} \label{sec:analytic}

We now verify the inequalities of Condition 4 of Lemma \ref{lem:demeth}.  Recall that $m=\mu \sqrt{\log(n)} n^{3/2}$, $s=n^{3/2}$, $s_j = n^{1/6}$ for all $j$, and we set $c=\frac{1}{2}$, and $\epsilon = 1/12$ in our application as well as selected $C$ to be suitably large.

It follows immediately that $n^{3(1/12)} < s < m < n^2$ and $n^{2/12} \le s_j < n^{-1/12}s$.  Noting that each $k_j \le n$, it also follows that $40Cs_j^2k_j n^{1/12} \le 40C n^{17/12} \ll n^{3/2}=s$.  Additionally, since $\gamma_j = \theta_j - 1$, \[\inf_{t \ge 0} \theta_j(t) - \gamma_j(t)/2 \ge \inf_{t \ge 0} \theta_j(t)/2 + 1/2 > 1/2.\]

We next show that the expressions in Condition $4$ of Lemma \ref{lem:demeth} for which $C$ must be an upper bound are bounded: as there are finitely many such expressions, it follows that some $C$ suitably large suffices.  Recall that we write $q^{\pm}_a$, $x^{\pm}_j$, $y^{\pm}_{k,l}$ for the positive and negative parts of $dq_a/dt$, $dx_j/dt$, and $dy_{k,l}/dt$, respectively.

To argue that $|q^{\pm}_a|$, $|x^{\pm}_j|$, and $|y^{\pm}_{k,l}|$ are bounded over the reals, it suffices to observe that
\[ q^{\pm}_a \le y, \ \ \ x^+_j \le \frac{2x_0y_{0,0}}{q},\ \ \ x^-_j \le \frac{2x_0(y + 2y_{0,0})}{q},\ \ \  y^+_{k,l} \le \frac{2x_0 + yy_{0,0}}{q},\ \ \text{ and } \ \ y^-_{k,l} \le \frac{3y^2}{q}.\]
As $y \le 6te^{-4t^2}$, $y_{0,0} \le \sqrt{2}e^{-4t^2}$, $q \ge \frac{1}{2}e^{-4t^2}$, and $x_0 = e^{-8t^2}$, it follows that each is bounded by a function of the form $g_1(t)e^{-4t^2}$, where $g_1$ is quadratic in $t$, which itself is trivially bounded over $[0,\infty)$ by some constant, call it $C_1$.  The bounds on $|q_a'|,|x_j'|,|y_{k,l}'|$ follow immediately by noting each is at most $2C_1$.\\

Next, we bound $\int_0^{\infty} |q_a''(t)|\ dt$, $\int_0^{\infty} |x_j''(t)|\ dt$, and $\int_0^{\infty} |y_{k,l}''(t)|\ dt$.  First, as $r(t) \in [0,\frac{1}{2\sqrt{2}})$ for all $t \ge 0$, it follows from \eqref{eq:oder} that $|r'(t)|$ is bounded on $[0,\infty)$, and as \[r''(t) = -\frac{24r(t)r'(t)}{(1 + 4r(t)^2)^2},\] therefore $|r''(t)|=-r''(t)$ is bounded as well.

It then follows from Theorem \ref{thm:odesol} that we can bound $\max_{a,j,k,l}\{|q_a''(t)|,|x_j''(t)|,|y_{k,l}''(t)|\}$ by a function of the form $g_2(t)e^{-4t^2}$, where $g_2$ is a polynomial in $t$.  We let $C_2 = \int_0^{\infty} g_2(t)e^{-4t^2}\ dt$.  Provided we take $C > \max\{2C_1,C_2\}$, the remaining inequalities requiring $C$ are satisfied.

We turn finally to the inequalities involving $h_j$: as $h_j=\theta_j'$, and $|h_j'| = |\theta_j''| = \theta_j''$, and both $\theta_j'$ and $\theta_j''$ are strictly increasing on $[0,\infty)$, it follows that \[\int_{0}^{m/s} |h_j'(t)|\ dt \le h_j(m/s) = \sup_{0 \le t \le m/s} |h_j(t)|.\]
From \eqref{eq:hvals}, we have that \[h_j(m/s) \le 2(K+4)\lp \mu\sqrt{\log(n)} + 1\rp \theta_j\lp\mu\sqrt{\log(n)}\rp \le 3K\sqrt{\log(n)}\cdot n^{\ep} \ll n^{1/12}\] by recalling that $\ep < 1/40$ and our choice of $\mu$ ensured $\theta_j(\mu\sqrt{\log(n)}) < n^{\ep}$ for all $j$.
This completes the verification of Condition 4 of Lemma \ref{lem:demeth}, and consequently the proof of Theorem \ref{thm:trajectory}.\\

\section{Degree Regularity - Proof of Theorem \ref{thm:regularity}} \label{sec:regularity}

To prove Theorem \ref{thm:regularity} we apply Lemma \ref{lem:demeth} with the following values:  $s=n^{3/2}$, $\epsilon=1/12$, $m$ is as given in the introduction, $c=1/2$, and $C$ is a suitably large constant.  Additionally, $k_j=1$ and $s_j = n^{1/6}$.

\begin{itemize}
\item For $j \in \{1,2\}$, we set $S_j=n$, $U_{j,v}=|W_{j-1,v}|$, $u_j = 2q_{j-1}$, $\theta_j = \theta_q$ and $\gamma_j = \theta_q-1$.
\item For $j=3$, we set $S_3 = \sqrt{n}$, $U_{3,v} = d_{0,v}$, $u_3 = 2r(t)$, $\theta_3 = \theta_r$ and $\gamma_3 = \theta_r-1$.
\item For $j=4$, we set $S_4 = \sqrt{n}$, $U_{4,v} = d_{1,v}$, $u_4 = 2(t-r(t))$, $\theta_4 = \theta_r$ and $\gamma_4 = \theta_r-1$.
\item Let $\ca{H}_{i^*}$ be the event that the conclusions of Theorem \ref{thm:trajectory} hold for the first $i^*$ steps, and that $|Z_{uv}(i)| < \log^2(n)$ for $0 \le i \le i^*$ and all $uv \in \bin{[n]}{2}$.
\end{itemize}

It then follows that
\begin{equation}\label{eq:hvals2}
\frac{h_j(t)}{4s_j} = \begin{cases}
\frac{K}{4}\delta_y & \text{ if } j \in \{1,2\}\\
\frac{2(K+4)t+K}{4}\delta_r & \text{ if } j \in \{3,4\}.\\
\end{cases}
\end{equation}

We first argue that the trend and boundedness hypotheses hold: fix a $v \in [n]$, and for $j \in \{0,1\}$ let \[|W_{j,v}(i+1)|-|W_{j,v}(i)| = W_{j,v}^+(i) - W_{j,v}^-(i), \ \ \text{ and }\ \  d_{j,v}(i+1)-d_{j,v}(i) = d_{j,v}^+(i) - d_{j,v}^-(i).\]  For a fixed $w \in W_{1,v}(i)$, let $z=z(w)$ be the unique vertex in $Z_{vw}(i)$.\\

It follows from the definitions as well as Lemma \ref{lem:approx} and the manipulations in Section \ref{sec:opedges} that
\begin{eqnarray*}
\ev{W_{0,v}^+(i)|\gd} &=& 0\\
\ev{W_{0,v}^-(i)|\gd} &=& \sum_{w \in W_{0,v}} \frac{|Y_{vw}(i)| + 1}{Q(i)} \subseteq \lp \frac{2q_0y}{q} \pm O(1)\delta_y \rp \sqrt{n}\\
\ev{W_{1,v}^+(i) | \gd } &=& \sum_{w \in W_{0,v}(i)} \frac{|Y_{0,0,vw}(i)|}{Q(i)} \subseteq \lp \frac{2q_0y_{0,0}}{q} \pm O(1)\delta_y\rp \sqrt{n}\\
\ev{W_{1,v}^-(i) | \gd} &=& \sum_{w \in W_{1,v}(i)} \frac{|Y_{wv}| + |Y_{0,0,vz}| + |Y_{0,0,wz}| - 1}{Q(i)} \subseteq \lp \frac{2q_1(y + 2y_{0,0})}{q} \pm O(1)\delta_y\rp \sqrt{n}.\\
\end{eqnarray*}

As $y = 8tq$,  $y_{0,0} + y_{1,0} = 8tq_0$, and $y_{0,1} + y_{1,1} = 8tq_1$, the trend hypothesis for the $|W_{j,v}|$ follows from \eqref{eq:hvals2}, provided $K$ is sufficiently large.  Boundedness follows from the earlier observation that the maximum change to the $Q_j$ is at most $\frac{9}{2}\sqrt{n} \ll n^{1-1/3-\epsilon}$.\\

We turn now to the $d_{j,v}$: clearly we have
\begin{eqnarray*} \ev{d_{0,v}^+(i)| \gd} =  \frac{|W_{0,v}|}{Q}
&=& \lp \frac{2(q_0 \pm \delta_q)}{q \pm 2\delta_q}\rp \frac{1}{n}\\
&\subseteq& \lp \frac{2q_0}{q} \pm \lp \frac{2q_0(2\delta_q)}{q(q-2\delta_q)} + \frac{2\delta_q}{q_0/2} \rp \rp \frac{1}{n}\\
&\subseteq& \lp \frac{2}{1 + 4r^2} \pm  12e^{4t^2}\delta_q \rp \frac{1}{n}\\
&\subseteq& \lp \frac{2}{1 + 4r^2} \pm  O(1)\delta_r \rp \frac{1}{n}.\\
\end{eqnarray*}
Similarly,
\begin{eqnarray*}
\ev{d_{0,v}^-(i) | \gd} &=& \sum_{w \in N_{0,v}(i)} \frac{|Y_{0,0,vw}|}{Q} \subseteq \lp \frac{2ry_{0,0}}{q} \pm O(1)\delta_r\rp \frac{1}{n},\\
\ev{d_{1,v}^+(i) | \gd} &=&  \frac{ |W_{1,v}| + \sum_{w \in N_{0,v}(i)} |Y_{0,0,vw}|}{Q} \subseteq \lp \frac{2q_1 + 2ry_{0,0}}{q} \pm O(1)\delta_r \rp \frac{1}{n}, \text{ and }\\
\ev{d_{1,v}^-(i)|\gd} &=& 0.
\end{eqnarray*}
As $2ry_{0,0} = 8r^2q_0$ and $q_1 = 4r^2 q_0$, the trend hypothesis follows.  Boundedness follows easily from the fact that $\max\{|d_{j,v}^{\pm}|\} \le 2$.

The initial conditions are rather trivial to verify: the initial error is $0$ for all but $|W_{0,v}(0)|$, where it is exactly $1 = o(n^{1-1/6})$.  We turn now to Condition $4$ of Lemma \ref{lem:demeth}.  Nearly all of the required inequalities were previously verified in Section \ref{sec:analytic}, except those which involve $(t-r(t))$ or $\theta_r$: it follows easily that $|\frac{d}{dt}[t-r(t)]|$ is bounded and $\frac{d^2}{dt^2}[t-r(t)] = -r''(t)$.

Furthermore, as $h_3(t)=h_4(t) = \theta_r'(t)$, it follows from \eqref{eq:hvals2} and the facts that $e^{4t^2} < \theta_y(t)$, $\theta_y(m/s) < n^{\ep}$, and $\ep < \frac{1}{40}$ that \[\int_0^{m/s} |h_j'(t)|\ dt \le \sup_{0 \le t \le m/s} |h_j(t)| = h_j(m/s) \ll n^{1/12}\] for $j=3,4$, completing the proof.

\section{Independence Number - Proof of Theorem \ref{thm:indepno}} \label{sec:independence}

To prove Theorem \ref{thm:indepno}, we use the general approach to bounding the independence number of the triangle-free process presented in \cite{B}.  Recall that edge $e_i$ is blue if and only if $e_i \in O_{0,i-1}$, and that Theorem \ref{thm:trajectory} yields that $O_{0,i}$ forms a substantial portion of $O_i$ for $i=0,\ldots,m$.

We therefore argue as follows: first, we show that if $I \subseteq [n]$ is large enough then $I$ contains roughly $q_0(t)|I|^2$ pairs that are ``almost" in $O_{0,i}$.  Then, we argue that, for $|I| = \gamma \sqrt{\log(n)\cdot n}$ with suitably large $\gamma$, a positive fraction of these pairs are actually in $O_{0,i}$.  It will then follow that the probability that $e_{i} \notin \bin{I}{2} \cap O_{0,i}$ for all $i$ is significantly smaller than the number of choices of $I$, and the union bound will yield the final result.

Let $\rho = \frac{1}{5}$, $\beta = 3\mu$, and let $\gamma$ be a suitably large positive constant.  We note that, as a consequence of Corollary \ref{cor:maxdeg}, with high probability we have $\beta \sqrt{n \log(n)} > \Delta(G_i) + C n^{2\rho}\log^2(n)$ for any $C>0$ and all $i=0,\ldots,m$.

Let $k = \frac{\gamma-\beta}{2}\sqrt{n\log(n)}$.  For $A \subseteq [n]$, let \[L_{A}(i) = \left\{v \in [n] : |N_v(i) \cap A| \ge \frac{k}{n^{\rho}} \right\}\text{ and } L_A'(i) = \left\{v \in [n] : |N_v(i) \cap A| \ge \frac{k}{n^{\rho}}-1\right\},\] where $N_v(i) = N_{0,v}(i) \cup N_{1,v}(i)$.

We say a pair $uv \in \bin{A}{2}$ is \textbf{weakly open} with respect to $A$ at time $i^*$ if $uv \notin E_{i^*}$ and $Z_{uv}(i) \subseteq L_A(i)$ for $i=0,1,\ldots,i^*$.  A pair which is not weakly open is \textbf{weakly closed} (even if it forms an edge in the graph $G_i$).  Let $W_A(i)$ be the set of weakly open pairs in $A$ at time $i$.  Finally, let $\ca{H}_{i^*}$ be the event that the conclusions of Theorems \ref{thm:trajectory} and \ref{thm:regularity} and Lemma \ref{lem:bdchange} hold for $0 \le i \le i^*$.

Our first step is to show $|W_A|$ is of the ``right" size.

\begin{lemma}\label{lem:wopen}
Let $\lambda \ge 1$ be fixed.  With high probability, for all $A$ with $|A| = \lambda k$, \[|W_A(i)| = \lp q_0(t) \pm \frac{\theta_y(t)}{n^{1/20}}\rp |A|^2.\]
\end{lemma}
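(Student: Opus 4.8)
The plan is to apply Lemma \ref{lem:demeth} with $|W_A(i)|$ as the single tracked variable, over sets $A \in \binom{[n]}{\lambda k}$ (so $k_j = |A| = \lambda k = \Theta(\sqrt{n\log n})$, $S_j = |A|^2$, and $u_1(t) = q_0(t)$). The key structural observation is that the weakly open pairs behave almost exactly like the open pairs of codegree $0$: a pair $uv \in \binom{A}{2}$ leaves $W_A$ at step $i+1$ precisely when either (a) $e_{i+1}$ is the pair $uv$ itself, (b) $e_{i+1}$ completes a triangle on $uv$ whose apex lands outside $L_A(i+1)$, or (c) $e_{i+1}$ creates a new common neighbor of $u,v$ outside $L_A$. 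Since $Z_{uv}(i) \le \log^2(n)$ throughout (part of $\ca{H}_i$) and the set $L_A$ changes slowly, the dominant contribution is from $e_{i+1}$ landing in a pair partial to $uv$ — i.e. from $\widetilde{Y}_{uv}(i)$ — exactly as in the analysis of $Q_0$ in Section \ref{sec:opedges}. So the expected one-step decrease is $\approx \sum_{uv \in W_A}\frac{|Y_{uv}(i)|}{Q(i)} \approx \frac{y(t)}{q(t)}|W_A(i)| \cdot \frac{1}{n^{3/2}}$, and combined with $|W_A(i)| \approx q_0(t)|A|^2$ and $\frac{dq_0}{dt} = -(y_{0,0}+y_{1,0}) = -\frac{q_0 y}{q}\cdot\frac{q}{q_0}\cdot\frac{q_0}{q}$... more carefully, one checks $\frac{dq_0}{dt}/q_0 = -y/q$ using $y_{0,0}+y_{1,0} = 8tq_0$ and $y = 8tq$ (these identities appear in Section \ref{sec:regularity}), so the trajectory $q_0(t)|A|^2$ is consistent. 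The error scaling $S_j/s_j = |A|^2/n^{1/20}$ is what forces $s_j = n^{1/20}$ here rather than $n^{1/6}$.

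First I would set up the variable and the events: take $s = n^{3/2}$, $\epsilon$ a small constant, $s_1 = n^{1/20}$, $\theta_1 = \theta_y$, $\gamma_1(t) = \theta_y(t) - 1$ (or a suitable rescaling so that $h_1 = \gamma_1'$ and the trend-error budget $h_1/4s_1$ absorbs the genuine error terms), and $\ca{H}_{i^*}$ as already defined in the excerpt (conclusions of Theorems \ref{thm:trajectory}, \ref{thm:regularity}, Lemma \ref{lem:bdchange} hold up to step $i^*$); note $\pr{\ca{H}_m \mid \ca{G}_m} \to 1$ follows by formally folding this into the single large application of Lemma \ref{lem:demeth} as discussed in Section \ref{sec:demeth}. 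Second, I would write $|W_A(i+1)| - |W_A(i)| = W_A^+(i) - W_A^-(i)$ with $W_A^+ \equiv 0$ (a weakly closed pair never reopens, since $Z_{uv}(i) \subseteq L_A(i)$ for \emph{all} $i \le i^*$ is a monotone-failure condition — once violated it stays violated, and edges never disappear). Third, the heart is estimating $\ev{W_A^- \mid \gd}$: for $uv \in W_A(i)$, $uv$ becomes weakly closed at step $i+1$ iff $e_{i+1} \in \widetilde{Y}_{uv}(i) \cup \{uv\}$, \emph{or} $e_{i+1}$ creates a common neighbor $z$ of $u,v$ with $z \notin L_A(i+1)$, \emph{or} some existing $z \in Z_{uv}(i)$ drops out of $L_A$ because of the new edge. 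The first event contributes $\frac{|Y_{uv}(i)| \pm 1}{Q(i)} = \frac{(y(t)\pm O(\delta_y))\sqrt n}{(q\pm 2\delta_q)n^2}$, handled by Lemma \ref{lem:approx} exactly as for $Q_0$. The second and third events I would argue are lower-order: creating a new common neighbor of a \emph{fixed} pair $uv$ happens with probability $O(\sqrt n / n^2) = o(\delta_y / n^{3/2} \cdot S_1)$-negligibly per pair (this is where $|Z_{uv}| \le \log^2 n$ and the choice $\rho = 1/5$, $k/n^\rho$ being the $L_A$ threshold, enter — a vertex $z$ near the threshold is rare, and a random edge is unlikely to be incident to $N_z \cap A$), and summing over $uv \in W_A$ (at most $\binom{|A|}{2} = O(n\log n)$ pairs) keeps the total error within the $\frac{h_1(t)}{4 s_1}\cdot\frac{S_1}{s}$ budget after choosing $K$ large. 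Finally I would verify boundedness ($W_A^-(i) < S_1/(s_1^2 k_1 n^\epsilon)$: the one-step change is at most $O(\log^2 n)$ by Lemma \ref{lem:bdchange} since only pairs partial to $e_{i+1}$ — or to the two companion edges $uz, vz$ when $e_{i+1} \in O_{1,i}$ — within $\binom{A}{2}$ are affected, and $|A|^2/(n^{1/10}\cdot\sqrt{n\log n}\cdot n^\epsilon)$ comfortably exceeds $\log^2 n$), the initial condition ($|W_A(0)| = \binom{|A|}{2} = q_0(0)|A|^2 - |A|/2$, with $|A|/2 = o(S_1/s_1) = o(|A|^2/n^{1/20})$ since $|A| = \Theta(\sqrt{n\log n}) \gg n^{1/20}$), and Condition 4 (mostly inherited from Section \ref{sec:analytic}, but with the new $s_1 = n^{1/20}$ one needs $s \ge 40 C s_1^2 k_1 n^\epsilon$, i.e. $n^{3/2} \gg n^{1/10}\cdot\sqrt{n\log n}\cdot n^\epsilon$, which holds for $\epsilon$ small).

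The main obstacle I expect is controlling the effect of $L_A$ changing over time — that is, bounding how often a pair $uv$ with some $z \in Z_{uv}(i)$ sitting just above the threshold $k/n^\rho$ gets weakly closed because $z$ fails to gain the neighbor in $A$ it would need to stay in $L_A$ (equivalently, $z$'s degree-into-$A$ being "frozen" while the threshold is effectively static). This requires a separate combinatorial estimate: the number of vertices $z$ with $|N_z(i)\cap A|$ within $O(1)$ of $k/n^\rho$, which I would bound using Theorem \ref{thm:regularity} (near-regularity of degrees, hence $|N_z \cap A|$ is typically much smaller than $k/n^\rho$ when $|A| = \lambda k$ and $k/n^\rho = \Theta(n^{1/2-\rho}\sqrt{\log n})$ — one needs $\Delta(G_i) \cdot |A| / n \ll k/n^\rho$, i.e. $\rho < 1/2$, which is why $\rho = 1/5$ works) together with a union bound over the few such $z$. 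Once that count is shown to be $n^{o(1)}$ times smaller than needed, these contributions fold into the $O(t+1)\delta_y$-type error and the differential-equations machinery closes exactly as in Sections \ref{sec:opedges}--\ref{sec:partverts}.
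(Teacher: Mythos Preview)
Your overall strategy is the paper's, and your trend computation lands in the right place, but there are two genuine errors in the mechanics.

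First, the transition you describe is garbled. A weakly open pair $uv$ becomes weakly closed at step $i+1$ if and only if $e_{i+1}=uv$ or $e_{i+1}$ is the partial pair connecting $uv$ to some $w\in Y_{uv}(i)\setminus L_A'(i)$. The event ``$e_{i+1}\in\widetilde Y_{uv}$'' does \emph{not} by itself weakly close $uv$ --- if the new common neighbor $w$ lies in $L_A'(i)$, then (since $u,v\in A$) $w\in L_A(i+1)$ and $uv$ stays weakly open. Your three cases overlap and include a phantom: the sets $L_A(i)$ are monotone increasing in $i$ (degrees into $A$ only grow), so no vertex ever ``drops out'' of $L_A$. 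The entire ``main obstacle'' paragraph is therefore addressing a non-issue. For the trend hypothesis this confusion is harmless, because $|L_A'(i)|\le 2\lambda n^{\rho}$ (a two-line counting argument from the codegree bound: double-counting edges from $L_A'$ into $A$ gives $|L_A'|\cdot(k/n^{\rho}-1)\le 2\cdot e(G_i[L_A',A])\le |A|\log^2 n$ would not quite work, but summing degrees into $A$ does), so subtracting $L_A'$ from each $Y_{uv}$ costs only $O(n^{\rho})$ per pair and your estimate $\sum_{uv\in W_A}|Y_{uv}|/Q$ is correct to within the allowed error.

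Second, and more seriously, your boundedness argument fails. The one-step change $W_A^-(i)$ is not $O(\log^2 n)$: if $e_{i+1}=vw$ with $v\in A$ and $w\notin L_A'(i)$, then every pair $uv$ with $u\in N_w(i)\cap A$ becomes weakly closed, and there can be up to $k/n^{\rho}-1$ such $u$. Lemma~\ref{lem:bdchange} is irrelevant here --- it bounds pairs partial to two distinct pairs, not pairs for which a single given edge is partial. The correct bound is $W_A^-(i)\le 2k/n^{\rho}+1=\Theta(n^{3/10}\sqrt{\log n})$, and this is precisely why the threshold $k/n^{\rho}$ defining $L_A$ (and $L_A'$) was chosen: the threshold itself caps the one-step change. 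You then need $2k/n^{\rho}<S_1/(s_1^2 k_1 n^{\epsilon})=\lambda k/n^{1/8}$, which holds since $\rho=1/5>1/8$. With these two fixes --- the clean transition characterization and the threshold-based boundedness --- the proof closes exactly as you outline; the paper takes $\epsilon=1/40$.
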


\begin{proof}
We apply Lemma \ref{lem:demeth} with the following values: $s = n^{3/2}$, $\epsilon = \frac{1}{40}$, $c$ and $C$ are the same as in Section \ref{sec:analytic}, $s_1 = n^{1/20}$, $k_1 = \lambda k$, $U_{1,A} = |W_A|$, $u_{1,A} = q_0$, $S_1 = (\lambda k)^2$, $\theta_1 = \theta_y$.  $\ca{H}_i$ is as above, and we let $\delta_w = \frac{\theta_y}{n^{1/20}} = n^{7/60}\delta_y$.  Conditions $3$ and $4$ of Lemma \ref{lem:demeth} are straightforward to verify, so we only argue the trend and boundedness hypotheses.

Let $|W_A(i+1)|-|W_A(i)| = W_A^+(i)-W_A^-(i)$: clearly $W_A^+(i)=0$.  As a weakly open pair $uv \in \bin{A}{2}$ becomes weakly closed only when $e_{i+1}$ connects $uv$ to $Y_{uv}(i)\setminus L_A'(i)$ or $e_{i+1}=uv$, it follows that $|W_A^-(i)| \le 2\frac{k}{n^{\rho}}+1$ which suffices as $n^{\rho} \gg n^{1/8}$.  It remains to verify the trend hypothesis for $W^-_A$, for which we will appeal to the following simple claim which follows from the bound on the codegrees.

\begin{claim}\label{clm:L_A}
Let $\lambda \ge 1$, and $A \in \bin{[n]}{\lambda k}$.  Conditioned on $\ca{H}_i$, $|L_A'(i)| \le 2\lambda n^{\rho}$.
\end{claim}

\noindent Therefore
\begin{eqnarray*}
\ev{W_A^-(i) | \gd} &=& \sum_{uv \in W_A(i)} \frac{|Y_{uv}| \pm 1 \pm 2\lambda n^{\rho}}{Q}\\
&\subseteq& \lp\frac{(q_0 \pm \delta_w)(y \pm (4\delta_y + \frac{3\lambda n^{\rho}}{\sqrt{n}}))}{q \pm 2\delta_q} \rp \frac{\lambda^2 k^2}{n^{3/2}}\\
&\subseteq& \lp \frac{q_0y}{q} \pm O(t+1)\delta_w \rp \frac{\lambda^2 k^2}{n^{3/2}},
\end{eqnarray*} and as $y = 8tq$, the trend hypothesis is verified.\\
\end{proof}

\noindent Now, suppose $\ca{H}_i$ holds and fix an $I \subseteq [n]$ with $|I| = \gamma \sqrt{n \cdot \log(n)}$.

\begin{claim}
\begin{equation}\label{eq:openpairsinI}
\left|O_{0,i} \cap \bin{I}{2}\right| \ge \frac{q(t)}{6} |I|^2.
\end{equation}
\end{claim}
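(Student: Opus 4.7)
My plan is to apply Lemma \ref{lem:wopen} to $A = I$ and then to argue that the weakly open pairs in $I$ with codegree at least one are few, so that almost all weakly open pairs already lie in $O_{0,i}$.

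First, I would set $\lambda = \frac{2\gamma}{\gamma - \beta}$ so that $|I| = \lambda k$ (and $\lambda > 2$ for $\gamma > \beta$, which we may assume since $\gamma$ is suitably large), and invoke Lemma \ref{lem:wopen} to obtain
\[|W_I(i)| \geq \lp q_0(t) - \theta_y(t)/n^{1/20}\rp |I|^2.\]
Since $\mu = \ep/(2K)$ with $\ep < 1/40$ ensures $\theta_y(t) \leq n^{\ep}$ for $t \in [0,\mu\sqrt{\log(n)}]$, the error term is $o(q_0(t))$, and so $|W_I(i)| \geq (1 - o(1))q_0(t)|I|^2$.

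Next, I would observe that $O_{0,i} \cap \bin{I}{2} \subseteq W_I(i)$: any $uv \in \bin{I}{2}$ with $Z_{uv}(i) = \emptyset$ is necessarily a non-edge and trivially satisfies the weakly open condition, since $Z_{uv}(i') \subseteq Z_{uv}(i) = \emptyset \subseteq L_I(i')$ for all $i' \leq i$. Writing $B = |W_I(i) \setminus O_{0,i}|$ for the number of ``bad'' weakly open pairs (those in $W_I(i)$ with $|Z_{uv}(i)| \geq 1$), we have $|O_{0,i} \cap \bin{I}{2}| = |W_I(i)| - B$. Each bad pair has some common neighbor $w$, and the weakly open condition forces $w \in L_I(i)$, so
\[B \leq \sum_{w \in L_I(i)} \bin{|N_w(i) \cap I|}{2}.\]
Applying Claim \ref{clm:L_A} to control $|L_I(i)|$ and Corollary \ref{cor:maxdeg} to bound each $|N_w(i) \cap I| \leq \Delta(G_i)$, then choosing $\gamma$ large relative to the other constants, yields $B \leq \frac{q(t)}{3}|I|^2$.

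Finally, using $q_0(t) = q(t)/(1+4r(t)^2) \geq \frac{2}{3}q(t)$ (which follows from $4r(t)^2 \leq 1/2$), we conclude
\[|O_{0,i} \cap \bin{I}{2}| \geq (1-o(1))\frac{2q(t)}{3}|I|^2 - \frac{q(t)}{3}|I|^2 \geq \frac{q(t)}{6}|I|^2\]
for $n$ sufficiently large. The main obstacle is the bound on $B$: controlling $\sum_{w \in L_I(i)}\bin{|N_w(i) \cap I|}{2}$ tightly enough, with $\gamma$ an absolute constant rather than growing with $n$, is the delicate step and is precisely what drives the choice of $\rho = 1/5$ together with the interplay between $\gamma$, $\beta$, and $\mu$ in the setup of Section \ref{sec:independence}.
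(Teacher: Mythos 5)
Your decomposition is correct up to the point where you bound the ``bad'' pairs: $O_{0,i}\cap\bin{I}{2}\subseteq W_I(i)$ does hold, and the bad pairs $B = W_I(i)\setminus O_{0,i}$ are indeed contained in $\bigcup_{w\in L_I(i)}\bin{N_w(i)\cap I}{2}$. The gap is that this union is \emph{not} bounded by $\frac{q(t)}{3}|I|^2$, and no choice of an absolute constant $\gamma$ can make it so. Even a \emph{single} vertex $w\in L_I(i)$ with $|N_w(i)\cap I|$ close to $\Delta(G_i)\approx 2\mu\sqrt{n\log n}$ (which Corollary \ref{cor:maxdeg} does not rule out) contributes roughly $2\mu^2 n\log n$ bad pairs, which is already $\Theta(|I|^2)$. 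With $|L_I(i)|$ of order $n^{\rho}$ (the bound of Claim \ref{clm:L_A} is tight in the worst case), you get $|B|=\Theta(n^{1+\rho}\log n)$, whereas $\frac{q(t)}{3}|I|^2 = O(q(t)\cdot n\log n)$ with $q(t)$ as small as $\Theta(n^{-4\mu^2})$ near $t=\mu\sqrt{\log n}$. The ratio grows like $n^{\rho+4\mu^2}$, polynomially in $n$, so the inequality $B\le\frac{q(t)}{3}|I|^2$ fails regardless of $\gamma$. Using the codegree bound $|Z_{uv}|\le\log^2 n$ instead of the degree bound gives $\sum_w\bin{|N_w\cap I|}{2}\le\bin{|I|}{2}\log^2 n$, which still overshoots by a $\log^2 n$ factor.

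The paper avoids subtracting bad pairs altogether. It removes the small exceptional set $I'$ (those vertices in $L_I(i)$ or with $\ge 2$ neighbors in $L_I(i)$), then greedily partitions $I$ into $A\cup B\cup C$ with $|A|=|B|=k$ so that every $v\in L_I(i)$ has neighbors in $I\setminus I'$ meeting at most one of $A,B$. This structural choice forces $W_I\cap(A\times B)\subseteq O_{0,i}$ outright: any weakly open pair in $A\times B$ has $Z_{uv}(i)\subseteq L_I(i)$, but any $w\in L_I(i)$ would be a common neighbor of one vertex in $A$ and one in $B$, which is excluded; hence $Z_{uv}(i)=\emptyset$. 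The lower bound on $|W_I\cap(A\times B)|$ then follows from applying Lemma \ref{lem:wopen} to $I$, $A\cup C$, and $B\cup C$ and subtracting. The moral: because a single vertex of $L_I$ can ruin a constant fraction of $\bin{I}{2}$, one cannot afford to discard all pairs touching $L_I$; one must instead arrange for such pairs to be split across $A$ and $B$.
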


\begin{proof}
We begin by casting aside a small portion of the vertices in $I$ as follows: let \[I' = (I \cap L_I(i)) \cup \{v \in I : |N_v(i) \cap L_I(i)| \ge 2\}.\]

It follows from Claim \ref{clm:L_A} and Lemma \ref{lem:bdchange} that
\begin{equation}\label{eq:I'}
|I'| \le 2\lp \frac{2\gamma}{\gamma -\beta}\rp n^{\rho} +  \lp 2 \cdot \frac{2\gamma}{\gamma-\beta} \cdot n^{\rho}\rp^2 \log^2(n) \le 65 n^{2\rho}\log^2(n).
\end{equation}

\noindent We then partition $I$ into sets $A$, $B$, $C$, with $|A| = |B| = k$ as follows: let $\widehat{L} \subseteq L_I(i)$ be a maximal set of vertices such that \[\left|\lp \bigcup_{v \in \widehat{L}} N_v(i)\rp \cap (I \setminus I') \right| \le k.\]  If $\widehat{L} = L_I$, then let $A \subseteq I$ be any $k$-element subset of $I \setminus I'$ containing $\lp \bigcup_{v \in \widehat{L}} N_v(i)\rp \cap (I\setminus I')$, and let $B$ be any $k$-element subset of $I \setminus (A \cup I')$.  Otherwise, let $v' \in L_I \setminus \hat{L}$ be arbitrary, and let $A$ be any $k$-element subset of $\bigcup_{v \in \widehat{L} \cup \{v'\}} N_v(i) \cap (I \setminus I')$ and $B$ be any $k$-element subset of $I \setminus (I' \cup \bigcup_{v \in \widehat{L} \cup \{v'\}} N_v(i))$.  It follows from our choice of $\beta$, Corollary \ref{cor:maxdeg}, and \eqref{eq:I'} that such a $B$ exists.  Finally, let $C = I \setminus (A \cup B)$.

By our choice of $A$ and $B$, we have that for all $v \in L_I$, at least one of $N_v(i) \cap A$ and $N_v(i) \cap B$ is empty.  This implies the final facts we need:
$W_I \cap (A \times B) \subseteq O_{0,i}$, and $W_I \setminus (A \times B) \subseteq W_{A \cup C} \cup W_{B \cup C}$.  Provided $\gamma$ is chosen sufficiently larger than $\beta$, it follows from two applications of Lemma \ref{lem:wopen} that \[|W_I \cap (A \times B)|\ge \lp q_0 - 2\delta_w \rp \frac{|I|^2}{3} > \frac{q_0}{4} |I|^2,\] and \eqref{eq:openpairsinI} follows by noting $q_0 \ge \frac{2}{3} q$.
\end{proof}

Following the remainder of the argument from \cite{B}, as $\ca{H}_i$ holds, $Q(i) \le (3/2)q(t)n^2$, so \[\pr{e_{i+1} \in O_{0,i} \cap \bin{I}{2}} \ge \frac{\gamma^2 \log(n)}{10n},\] and consequently the probability that $e_{i+1} \notin O_{0,i} \cap \bin{I}{2}$ for $i=0,\ldots,m-1$, conditioned on $\ca{H}_m$, is at most
\[\lp 1 - \frac{\gamma^2 \log(n)}{9n} \rp^{\mu \sqrt{\log(n)}n^{3/2}} \le \exp\lp-\frac{\gamma^2 \mu}{9} \log^{3/2}n \cdot \sqrt{n}\rp,\] while \[\bin{n}{\gamma \sqrt{n \log(n)}} \le \exp\lp \frac{\gamma}{2} \log^{3/2} n \cdot \sqrt{n} \rp.\]  By the union bound, with high probability all such $I$ contain at least one blue edge in $G_m$.

\section{Concluding Remarks}

An immediate consequence of Theorem \ref{thm:regularity} is that, w.h.p., $|E_{0,m}| = o(m)$, and therefore $e(G_{blue,m}) = (2/3 + o(1)) e(G_m)$.  Despite the fact that we believe $m$ to be only a small fraction of the final size, we believe this relationship holds for the final graph as well, which we state as a conjecture.

\begin{conjecture}
\[e(G_{blue}) = \lp \frac{2}{3} + o(1) \rp e(G_M).\]
\end{conjecture}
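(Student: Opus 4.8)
The first step is to rephrase the conjecture in terms of triangle-free edges. Since $G_M$ is diamond-free, every edge of $G_M$ lies on at most one triangle, and, as noted in Section \ref{sec:defs}, every triangle of $G_M$ consists of one green edge and two blue edges; consequently the green edges of $G_M$ are in bijection with its triangles. Writing $T$ for the number of triangles of $G_M$, it follows that $e(G_{blue}) = e(G_M) - T$, while the set of edges of $G_M$ lying on no triangle is exactly $E_{0,M}$: of the $e(G_M) - T$ blue edges, exactly $2T$ lie on a triangle (two per triangle, and no edge lies on two triangles), so $|E_{0,M}| = e(G_M) - 3T$. Eliminating $T$ gives $e(G_{blue}) = \frac{2}{3}e(G_M) + \frac{1}{3}|E_{0,M}|$, so the conjecture is \emph{equivalent} to the statement $|E_{0,M}| = o(e(G_M))$. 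Since $e(G_M) = M(K_4^-) = \Theta(\sqrt{\log n}\cdot n^{3/2})$ by Theorem \ref{thm:finalsize}, it therefore suffices to prove that w.h.p. $|E_{0,M}| = O(n^{3/2})$ --- that is, that the bound $|E_{0,i}| = O(n^{3/2})$ supplied by Theorem \ref{thm:regularity} continues to hold at the termination step $i = M$, and not merely for $i \le m$.

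The plan, then, is to extend the trajectory analysis past $m$, and the heuristic of Section \ref{sec:trajderiv} makes the target very plausible. The one-step change $|E_{0,i+1}| - |E_{0,i}|$ equals $+1$ when $e_{i+1} \in O_{0,i}$ and $-2$ when $e_{i+1} \in O_{1,i}$, so its conditional expectation is $(Q_0(i) - 2Q_1(i))/Q(i)$, which --- if the variables follow their trajectories --- equals $r'(t)$ in the rescaled time $t = i/n^{3/2}$ by \eqref{eq:oder}, and $r(t)$ increases monotonically to $1/2\sqrt{2}$. Thus $|E_{0,i}| \approx r(t)n^{3/2}$ stays $O(n^{3/2})$ even as $t \into \infty$, while $i = tn^{3/2}$ grows; the telescoping identity together with the bounded difference $|E_{0,i+1}| - |E_{0,i}| \le 2$ and an Azuma--Hoeffding estimate would then give $|E_{0,M}| = O(n^{3/2}) + O(e(G_M)^{1/2}) = o(e(G_M))$. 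Note that one does not need the full force of a trajectory result for all of the variables here: it would be enough to control $Q_0$ and $Q_1$ up to constant factors for all $i \le M$ --- or, via the identity $|E_{0,M}| = \frac{1}{2}\sum_v d_{0,v}(M)$, to show $d_{0,v}(i) = O(n^{1/2})$ for every $v$ and all $i \le M$, which extends the degree estimate of Theorem \ref{thm:regularity} --- or even just to establish the cruder $\sum_{i<M}(Q_0(i) - 2Q_1(i))/Q(i) = o(e(G_M))$.

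The main obstacle is precisely this extension. The differential-equations method as applied in Sections \ref{sec:trajectory}--\ref{sec:regularity} is valid only for $t$ up to $\mu\sqrt{\log n}$ with $\mu$ a \emph{small} constant, because the error functions $\theta_\kappa$ grow like $e^{Kt^2}$ and must be kept below $n^{\epsilon}$; pushing even the weak, constant-factor control of $Q_0, Q_1$ (or of the $d_{0,v}$) to a large multiple of $\sqrt{\log n}$, let alone to the termination time of the process, appears to require the much finer martingale and concentration machinery developed for the triangle-free process in \cite{B} and \cite{BK} (and its subsequent refinements), adapted to the additional feature --- stressed in Section \ref{sec:intro} --- that open pairs whose addition would create a triangle become closed faster than those whose addition would not, which is exactly what distinguishes the continuous approximation here from the strictly $2$-balanced case. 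Obtaining even the two-sided bounds on $Q_0$ and $Q_1$ needed above --- in particular that they remain comparable and that neither vanishes prematurely --- is essentially as hard as an asymptotic analysis of the diamond-free process up to termination, which is why we have stated this only as a conjecture.
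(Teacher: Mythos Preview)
This statement is a \emph{conjecture} in the paper, not a theorem: the paper offers no proof, only the one-sentence motivation (just before the conjecture) that Theorem~\ref{thm:regularity} gives $|E_{0,m}| = o(m)$ and hence $e(G_{blue,m}) = (2/3 + o(1))e(G_m)$ at the intermediate step $m$, together with the stated belief that this persists to $M$. Your proposal is likewise not a proof but a heuristic discussion of why the conjecture is plausible and what obstructs a proof; in that sense it matches the paper's treatment. You go somewhat further than the paper by making the identity $e(G_{blue}) = \tfrac{2}{3}e(G_M) + \tfrac{1}{3}|E_{0,M}|$ explicit (the paper uses this implicitly for step $m$ but does not write it down) and by articulating the obstacle precisely: the differential-equations framework of Sections~\ref{sec:trajectory}--\ref{sec:regularity} is only valid for $t \le \mu\sqrt{\log n}$ with $\mu$ small, whereas one would need control of $Q_0/Q_1$ or of the $d_{0,v}$ all the way to termination. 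Your counting argument in the first paragraph is correct, and your final paragraph accurately identifies why the conjecture remains open. There is no gap to name, since neither you nor the paper claims a proof.
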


\section*{Acknowledgement} The author thanks Tom Bohman for suggesting the problem as well as many helpful comments on earlier drafts.

\end{document}